\date{}
\renewcommand{\uppercasenonmath}[1]{}
\numberwithin{equation}{section} \theoremstyle{plain}
\newtheorem*{theorem*}{Theorem A}
\newtheorem*{theorem**}{Theorem B}
\newtheorem*{theorem***}{Theorem C}
\newtheorem{theorem}{Theorem}[section]
\newtheorem*{corollary*}{Corollary}
\newtheorem{lemma}[theorem]{Lemma}
\newtheorem*{lemma*}{Lemma}
\newtheorem{proposition}[theorem]{Proposition}
\newtheorem*{proposition*}{Proposition}
\newtheorem{remark}[theorem]{Remark}
\newtheorem*{remark*}{Remark}
\newtheorem*{example*}{Example}
\newtheorem{definition}[theorem]{Definition}
\newtheorem*{definition*}{Definition}
\newtheorem*{conjecture*}{Conjecture}
\newtheorem*{ack*}{ACKNOWLEDGEMENTS}
\newcommand{\pf}{\noindent\begin {proof}}
\newcommand{\epf}{\end{proof}}
\begin{document}
\begin{center}
{\Large \bf On pure derived and pure singularity categories
 \footnotetext{
 $^{*}$ Corresponding author. E-mail address: wren@cqnu.edu.cn}}

\vspace{0.5cm}   Tianya Cao$^\dag$, Wei Ren$^*$\\
{\small $^\dag$ Department of Mathematics, Northwest Normal University, Lanzhou $730070$, China\\
$^*$ School of Mathematical Sciences, Chongqing Normal University, Chongqing $401331$, China}
\end{center}


\bigskip
\centerline { \bf  Abstract}
\leftskip10truemm \rightskip10truemm
\noindent Firstly, we compare the bounded derived categories with respect to the pure-exact and the usual exact structures,
and describe bounded derived category by pure-projective modules, under a fairly strong assumption on the ring. Then, we study Verdier quotient
of bounded pure derived category modulo the bounded homotopy category of pure-projective modules, which is called a pure singularity category since
we show that it reflects the finiteness of pure-global dimension of rings. Moreover, invariance of pure singularity in a recollement of bounded
pure derived categories is studied.
\bigskip

{\noindent \it Key Words:}  derived category, pure derived category, pure singularity category.\\
{\it 2010 MSC:} 18E30, 18E10, 16E30\\

\leftskip0truemm \rightskip0truemm \vbox to 0.2cm{}

\section { \bf Introduction}
Recall that a short exact sequence $\mathbf{E}=0\rightarrow M_1\rightarrow M_2\rightarrow M_3\rightarrow 0$ of left $R$-modules is pure-exact provided that for any right $R$-module $N$ the induced sequence $N\otimes_{R}\mathbf{E}$ is exact. Purity is the basis of a relative homological theory, which leads to concepts such as pure-projective modules, pure-projective resolutions, pure derived functors Pext(-,-), pure-projective dimension etc., see for example \cite{Emm16, Gri70, JL89, KS75, PR05, Sim02, War69}.  It is also worth noting that a proof of the flat cover conjecture may be obtained by studying filtrations of a module by pure submodules \cite{BBE01}.

It is well known that in the sense of Neeman \cite{Nee90}, $\mathbf{D}(R)$ is the derived category of the exact category $(R\text{-Mod}, \mathcal{E})$, where $\mathcal{E}$ is the collection of all short exact sequences of $R$-modules. Let $\mathcal{E}_{pur}$ be the collection of all short pure-exact sequences, then $(R\text{-Mod}, \mathcal{E}_{pur})$ is an exact category. Recently, Zheng and Huang introduced in \cite{ZH16} the pure derived category $\mathbf{D}_{pur}(R)$ as the derived category of the exact category $(R\text{-Mod}, \mathcal{E}_{pur})$.

In this paper, we firstly intend to compare the bounded derived categories of these two exact structures, and to describe bounded derived category by pure-projective modules. Our first main result (Theorem \ref{thm 3.1}) shows that: If the subcategory $\mathcal{PP}$ of pure-projective $R$-modules is closed under kernels of epimorphisms, then there are triangle-equivalences:
\begin{center}$\mathbf{D}^{b}(R)\simeq\mathbf{D}_{pur}^{b}(R)/\mathbf{K}^{b}_{ac}(\mathcal{PP})\simeq \mathbf{K}^{-,ppb}(\mathcal{PP})/\mathbf{K}^{b}_{ac}(\mathcal{PP}),$\end{center}
where $\mathbf{K}^{-,ppb}(\mathcal{PP})$ is the homotopy category of upper bounded complexes of pure-projective modules which is bounded in the sense of pure cohomology, and $\mathbf{K}^{b}_{ac}(\mathcal{PP})$ is the homotopy category of bounded and exact (=acyclic) complexes of pure-projective modules. Some conditions for the assumption that $\mathcal{PP}$ is closed under kernels of epimorphisms are given in Remark \ref{rem 3.1}.

Recall that for a noetherian ring $R$, the category $\mathbf{D}_{sg}(R)$ is defined to be a Verdier quotient of bounded derived category of finitely generated modules $\mathbf{D}^{b}(R\text{-mod})$ modulo the bounded homotopy category of finitely generated projective modules $\mathbf{K}^{b}(R\text{-proj})$; it was also studied by Buchweitz \cite{Buc87} under the name of ``stable derived category''.
Since $\mathbf{D}_{sg}(R) = 0$ if and only if $R$ has finite global dimension (i.e. $\mathbf{D}_{sg}(R)$ reflects homological singularity of the ring $R$), this quotient triangulated category is called the singularity category of $R$ after Orlov \cite{Orl04}.
For any ring $R$, Beligiannis \cite{Bel00} studied the singularity category $\mathbf{D}^{b}(R)/\mathbf{K}^{b}(\mathcal{P})$, where $\mathbf{D}^{b}(R)$ is the bounded derived category of any $R$-modules, and $\mathbf{K}^{b}(\mathcal{P})$ is the bounded homotopy category of any projective $R$-modules. See also the literature \cite{Z15}.

We are inspired to study the pure version of singularity category. The pure singularity category $\mathbf{D}_{psg}(R)$ is defined
(Definition \ref{def 4.1}) as Verdier quotient $\mathbf{D}^{b}_{pur}(R)/\mathbf{K}^{b}(\mathcal{PP})$. We remark that the notion of pure singularity category seems to be reasonable since  $\mathbf{D}_{psg}(R)=0$ if and only if the pure-global dimension of $R$ is finite, see Proposition \ref{prop 4.2}. Moreover, we show that for any rings $A$, $B$ and $C$, if $\mathbf{D}^{b}_{pur}(A)$ admits a recollement
$\xymatrix{
   \mathbf{D}^{b}_{pur}(B)\ar[r]^{} & \mathbf{D}^{b}_{pur}(A) \ar[r]^{}\ar@<-0.8ex>[l]^{} \ar@<0.8ex>[l]_{} &\mathbf{D}^{b}_{pur}(C)\ar@<-0.8ex>[l]^{} \ar@<0.8ex>[l]_{}
   }$
relative to $\mathbf{D}^{b}_{pur}(B)$ and  $\mathbf{D}^{b}_{pur}(C)$, then $\mathbf{D}_{psg}(A)=0$ if and only if $\mathbf{D}_{psg}(B) = 0 = \mathbf{D}_{psg}(C)$; see Theorem \ref{thm 4.1}.

\section{\bf Preliminary}

Throughout this paper, $R$ denotes a ring with unity, and $R$-Mod the category of left $R$-modules. A short exact sequence $\mathbf{E}=
0\rightarrow M_1\stackrel{f}\rightarrow M_2\stackrel{g}\rightarrow M_3\rightarrow 0$ in $R$-Mod is called pure-exact if for any right $R$-module $N$, the induced sequence $0\rightarrow M_1\otimes_{R}N\rightarrow M_2\otimes_{R}N\rightarrow M_3\otimes_{R}N\rightarrow 0$ remains exact. In this case, $f$ is called pure monic and $g$ is called pure epic. By Cohn's theorem (see \cite[Theorem 3.65]{Rot79}), the above sequence is pure-exact if and only if $0\rightarrow \mathrm{Hom}_{R}(F, M_1)\rightarrow \mathrm{Hom}_{R}(F, M_2)\rightarrow \mathrm{Hom}_{R}(F, M_3)\rightarrow 0$ is exact for any finitely presented $R$-module $F$. Moreover, it turns out that the above exact sequence is pure-exact if and only if the associated short exact sequence of Pontryagin duals $0\rightarrow \mathrm{Hom}_{\mathbb{Z}}(M_3, \mathbb{Q}/\mathbb{Z})\rightarrow \mathrm{Hom}_{\mathbb{Z}}(M_2, \mathbb{Q}/\mathbb{Z}) \rightarrow \mathrm{Hom}_{\mathbb{Z}}(M_1, \mathbb{Q}/\mathbb{Z})\rightarrow 0$ is split (see \cite[Theorem 6.4]{JL89}). It is worth noting that the sequence $\mathbf{E}$ is pure-exact if and only if every finite system of linear equations $y_i = \sum_{j\in J} r_{ij}x_{j}$ with $y_{i}\in M_1$, $r_{ij}\in R$, $i\in I$, which has a solution $(x_{j})\in M_{2}^{J}$ also has a solution in $M_{1}^{J}$, where $I$ and $J$ are finite index sets.

Recall that an exact structure on an abelian category $\mathcal{A}$ is a class $\mathcal{E}$ of kernel-cokernel pairs $A\stackrel{f}\hookrightarrow B\stackrel{g}\twoheadrightarrow C$ which satisfies some axioms, see for example \cite{Buh10}. In the kernel-cokernel pair $(f,g)$, $f$ is called an admissible monic and $g$ is called an admissible epic. Let $\mathcal{E}_{pur}$ denote the class of all short pure-exact sequences, then $(R\text{-}\mathrm{Mod}, \mathcal{E}_{pur})$ is an exact category.

Let $X=\cdots \rightarrow X^{i-1}\stackrel{d^{i-1}}\rightarrow X^{i}\stackrel{d^{i}}\rightarrow X^{i+1}\rightarrow\cdots$ be an exact complex of modules. Then $X$ is said to be pure-exact at degree $n$ if the short exact sequence $0\rightarrow K^{n}\rightarrow X^{n}\rightarrow K^{n+1}\rightarrow 0$ is pure-exact, where $K^{n}= \mathrm{Ker}d^{n}$. $X$ is called pure-exact (=pure-acyclic) if it is pure-exact at all degree $n$.

An $R$-module $P$ is called pure-projective if $\mathrm{Hom}_{R}(P,-)$ is exact on all pure-exact sequences. Thus projective modules and finitely presented modules are pure-projective. Warfield \cite{War69} showed that a module $P$ is pure-projective if and only if it is a direct summand of a direct sum of finitely presented modules. We use $\mathcal{P}$ and $\mathcal{PP}$ to denote the class of all projective modules and pure-projective modules, respectively.

Let $\mathcal{C}$ be a full subcategory of an additive category $\mathcal{A}$, and $X\in \mathcal{A}$. Recall that a morphism $f: C\rightarrow X$ with $C\in\mathcal{C}$ is a $\mathcal{C}$-precover (or, right $\mathcal{C}$-approximation) of $X$, if $\mathrm{Hom}_{\mathcal{A}}(C^{'}, C)\rightarrow\mathrm{Hom}_{\mathcal{A}}(C^{'}, X)$ is surjective for each $C^{'}\in\mathcal{C}$. If each object $X\in\mathcal{A}$ admits a  $\mathcal{C}$-precover, then $\mathcal{C}$ is said to be precovering (or, contravariantly finite) in $\mathcal{A}$.
Any module $M$ has a pure-projective precover,  for example, if $M$ is expressed as the colimit $\underrightarrow{\mathrm{lim}}M_{i}$ of a directed system of finitely presented modules, then the canonical map $\bigoplus_{i}M_{i}\rightarrow M$ is a pure epimorphism (a pure-projective precover) (see \cite[\S II.1.1.3]{RG71}); see also \cite{War69} for the existence of pure-projective precovers. Thus, $\mathcal{PP}$ is precovering, and any module $M$ has a pure-projective resolution, i.e.  a pure-exact complex $\cdots \rightarrow P_{1}\rightarrow P_0\rightarrow M\rightarrow 0$ with each $P_i$ pure-projective.

We always identify an $R$-module $M$ with the complex concentrated in degree 0. For a complex $X$ and an integer $n$, $X[n]$ denotes the complex $X$ shifting $n$ degree to the left, that is, $X[n]^{m} = X^{n+m}$ and $d_{X[n]}^{m} = (-1)^{n}d^{n+m}_{X}$.
Given two $R$-complexes $X$ and $Y$, the complex $\mathrm{Hom}_{R}(X, Y)$ is defined with
$\mathrm{Hom}_{R}(X, Y)^{n} = \prod_{k\in \mathbb{Z}}\mathrm{Hom}_{R}(X^{k}, Y^{k+n})$, and with differential
$d^{n}(f^{k}) = (d^{k+n}_{Y}f^{k} - (-1)^{n}f^{k+1}d^{k}_{X})_{k\in\mathbb{Z}}$ for
$f = (f^{k})\in \mathrm{Hom}_{R}(X, Y)^{n}$.

A cochain map $f: X\rightarrow Y$ of complexes is a family of morphisms $f = (f^{n}: X^{n}\rightarrow Y^{n})_{n\in \mathbb{Z}}$
of $R$-modules satisfying $d_{Y}^{n}f^{n} = f^{n+1}d^{n}_{X}$ for all $n\in \mathbb{Z}$. Cochain maps
$f, g: X\rightarrow Y$ are called homotopic, denoted $f\sim g$, if there exists a family of
morphisms $(s^{n}: X^{n}\rightarrow Y^{n-1})_{n\in\mathbb{Z}}$ of $R$-modules, satisfying
$f^{n}-g^{n} = d^{n-1}_{Y}s^{n} + s^{n+1}d^{n}_{X}$ for all $n\in \mathbb{Z}$. A map $f: X\rightarrow Y$ of complexes is called a quasi-isomorphism if it induces isomorphic homology groups, and $f$ is called a homotopy equivalence if there exists a cochain map $g: Y\rightarrow X$ such that $gf\sim \mathrm{Id}_{X}$ and $fg\sim \mathrm{Id}_{Y}$. For $\mathrm{Con}(f)$ we mean the mapping cone of $f$, which is defined with $\mathrm{Con}(f)^{n} =X^{n+1}\oplus Y^{n}$ and with differential $d^{n}_{\mathrm{Con}(f)}=\begin{pmatrix} -d^{n+1}_{X}&0\\ f^{n+1}&d^{n}_{Y}\end{pmatrix}$
for all $n\in\mathbb{Z}$. Note that $f: X\rightarrow Y$ is a quasi-isomorphism if and only if $\mathrm{Con}(f)$ is an exact complex.

Let $X$ be an $R$-complex. It follows immediately that $X$ is pure-exact if $\mathrm{Hom}_{R}(P, X)$ is exact for each pure-projective module $P$. A cochain map $f: X\rightarrow Y$ of $R$-complexes is a pure-quasi-isomorphism provided that $\mathrm{Hom}_{R}(P, f)$ is a quasi-isomorphism for all $P\in \mathcal{PP}$, or equivalently, $\mathrm{Con}(f)$ is a pure-exact complex. It is easy to see that every pure-exact complex is exact, and every pure-quasi-isomorphism is a quasi-isomorphism.

It is well known that the derived category is a Verdier quotient of the homotopy category with respect to the thick triangulated subcategory of
exact complexes. In general, given a triangulated subcategory $\mathcal{B}$ of a triangulated category $\mathcal{K}$, in the Verdier quotient
$\mathcal{K} / \mathcal{B} = S^{-1}\mathcal{K}$, where $S$ is the compatible multiplicative system determined by $\mathcal{B}$, each morphism
$f: X\rightarrow Y$ is given by an equivalent class of right fractions $a/s$ presented by $X\stackrel{s}\Longleftarrow Z \stackrel{a}\longrightarrow Y$.

We use ``$\cong$'' to denote the isomorphsims of objects, and ``$\simeq$'' to denote the equivalences of categories.
As usual, $\mathbf{K}(R)$ is the homotopy category of $R$-complexes, and $\mathbf{D}(R)$ is the derived category of $R$; and furthermore, we use the superscript $\ast\in \{-, +, b\}$ to denote the corresponding subcategories with conditions of bounded above, bounded below and bounded, respectively. We denote by $\mathbf{K}^{\ast}_{ac}(R)$ and $\mathbf{K}_{pac}^{\ast}(R)$ the homotopy category consisting of exact (acyclic) complexes and pure-exact (pure-acyclic) complexes, respectively. Clearly, $\mathbf{K}_{ac}^{\ast}(R)$ is a thick triangulated subcategory of $\mathbf{K}^{\ast}(R)$. Since pure-acyclic complexes are closed under summands, it follows from Rickard's criterion (see \cite[Proposition 1.3]{Ric89} or \cite[Criterion 1.3]{Nee90}) that $\mathbf{K}^{\ast}_{pac}(R)$ is a thick triangulated subcategory of $\mathbf{K}_{ac}^{\ast}(R)$, and hence of $\mathbf{K}^{\ast}(R)$. We remark that the terminology ''thick'' in \cite{Ric89} is ``\'{e}paisse'' in French. For $\ast\in \{\text{blank}, -, b\}$, let $\mathbf{K}^{\ast}(\mathcal{P})$ and $\mathbf{K}^{\ast}(\mathcal{PP})$ be respectively the homotopy category of complexes of projective modules and pure-projective modules.

\section{\bf Pure derived categories and derived categories}
Zheng and Huang \cite{ZH16} defined the pure derived category $\mathbf{D}^{\ast}_{pur}(R):= \mathbf{K}^{\ast}(R)/\mathbf{K}^{\ast}_{pac}(R)$ as a Verdier quotient of $\mathbf{K}^{\ast}(R)$ modulo the thick subcategory $\mathbf{K}^{\ast}_{pac}(R)$, where $\ast\in \{\text{blank}, -, b\}$.
In fact, $\mathbf{D}^{\ast}_{pur}(R)$ is the derived category of the exact categories ($R$-Mod, $\mathcal{E}_{pur}$) in the sense of Neeman \cite{Nee90}, where $\mathcal{E}_{pur}$ is the collection of all short pure-exact sequences of $R$-modules.

In this section, we intend to compare $\mathbf{D}^{b}(R)$ and $\mathbf{D}_{pur}^{b}(R)$, and to describe $\mathbf{D}^{b}(R)$ by pure-projective modules. The main result of this section is stated below, where
$$\mathbf{K}^{-,ppb}(\mathcal{PP}):= \left\{X\in \mathbf{K}^{-}(\mathcal{PP}) \, \vline \,\begin{matrix}  \text{there exists }
n=n(X)\in\mathbb{Z}, \text{ such that }\\
\mathrm{H}^{i}\mathrm{Hom}_{R}(P,X) = 0, \forall i\leq n, \forall P\in \mathcal{PP}
 \end{matrix} \right\},$$
and $\mathbf{K}^{b}_{ac}(\mathcal{PP})$ is the homotopy category of bounded exact complexes of pure-projective modules.

\begin{theorem}\label{thm 3.1} Let $R$ be a ring. If the subcategory $\mathcal{PP}$ of pure-projective $R$-modules is closed under kernels of epimorphisms, then there are triangle-equivalences
$$\mathbf{D}^{b}(R)\simeq\mathbf{D}_{pur}^{b}(R)/\mathbf{K}^{b}_{ac}(\mathcal{PP})\simeq \mathbf{K}^{-,ppb}(\mathcal{PP})/\mathbf{K}^{b}_{ac}(\mathcal{PP}).$$
\end{theorem}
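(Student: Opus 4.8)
The plan is to prove the two claimed equivalences separately; the closure hypothesis on $\mathcal{PP}$ is needed only for the first one.

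\emph{Step 1 (no hypothesis): $\mathbf{D}^{b}_{pur}(R)\simeq\mathbf{K}^{-,ppb}(\mathcal{PP})$ via pure-projective resolutions.} The key point is that a bounded-above complex $P$ of pure-projective modules is pure-homotopy-projective, i.e.\ $\mathrm{Hom}_{\mathbf{K}(R)}(P,A)=0$ for every pure-acyclic complex $A$: given a cochain map $f\colon P\to A$ one constructs a contracting homotopy $s^{n}\colon P^{n}\to A^{n-1}$ by downward induction from the top nonzero degree, lifting at each step a map out of the pure-projective $P^{n}$ along the pure epimorphism $A^{n-1}\twoheadrightarrow\mathrm{Ker}\,d^{n}_{A}$, which is pure precisely because $A$ is pure-acyclic. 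Combined with the existence of pure-projective resolutions recalled in Section~2 (and the usual comparison theorem for resolutions), this yields a triangle equivalence $\mathbf{K}^{-}(\mathcal{PP})\xrightarrow{\ \sim\ }\mathbf{D}^{-}_{pur}(R)$, exactly parallel to the classical $\mathbf{K}^{-}(\mathcal{P})\simeq\mathbf{D}^{-}(R)$; and, as in the classical case, $\mathbf{D}^{b}_{pur}(R)$ embeds fully faithfully in $\mathbf{D}^{-}_{pur}(R)$ as the objects pure-quasi-isomorphic to a bounded complex.

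It then remains to identify those objects, inside $\mathbf{K}^{-}(\mathcal{PP})$, with $\mathbf{K}^{-,ppb}(\mathcal{PP})$. One direction: if $X$ is bounded then $\mathrm{Hom}_{R}(Q,X)$ is bounded with bounds independent of $Q\in\mathcal{PP}$, so any pure-projective resolution of $X$ lies in $\mathbf{K}^{-,ppb}(\mathcal{PP})$. Conversely, if $P\in\mathbf{K}^{-,ppb}(\mathcal{PP})$ has $\mathrm{H}^{i}\mathrm{Hom}_{R}(Q,P)=0$ for all $i\le n$ and all $Q\in\mathcal{PP}$, then (by Cohn's criterion, applied to the syzygy short exact sequences) $P$ is exact in degrees $\le n$ and pure-exact in degrees $\le n-1$, and the canonical map from $P$ to the bounded complex $\bigl(0\to\mathrm{Coker}\,d^{n-1}_{P}\to P^{n+1}\to\cdots\bigr)$ has pure-acyclic fibre; hence $P$ is pure-quasi-isomorphic to a bounded complex. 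This establishes $\mathbf{D}^{b}_{pur}(R)\simeq\mathbf{K}^{-,ppb}(\mathcal{PP})$. Since a bounded exact complex of pure-projectives is already pure-homotopy-projective, it is its own pure-projective resolution, so $\mathbf{K}^{b}_{ac}(\mathcal{PP})$ matches up on the two sides, and passing to Verdier quotients gives the second equivalence of the theorem.

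\emph{Step 2 (uses the hypothesis): $\mathbf{D}^{b}(R)\simeq\mathbf{D}^{b}_{pur}(R)/\mathbf{K}^{b}_{ac}(\mathcal{PP})$.} Since $\mathbf{K}^{b}_{pac}(R)\subseteq\mathbf{K}^{b}_{ac}(R)$, the standard iterated-Verdier-quotient principle gives $\mathbf{D}^{b}(R)\simeq\mathbf{D}^{b}_{pur}(R)/\overline{\mathbf{K}^{b}_{ac}(R)}$, where $\overline{\mathbf{K}^{b}_{ac}(R)}$ is the thick subcategory of $\mathbf{D}^{b}_{pur}(R)$ generated by the images of bounded exact complexes. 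So the theorem reduces to the identity $\overline{\mathbf{K}^{b}_{ac}(R)}=\mathbf{K}^{b}_{ac}(\mathcal{PP})$ inside $\mathbf{D}^{b}_{pur}(R)$; the inclusion $\supseteq$ is obvious, and for $\subseteq$ it suffices to show that a bounded exact complex $X$ becomes, in $\mathbf{D}^{b}_{pur}(R)$, isomorphic to a bounded exact complex of pure-projectives. Take a pure-projective resolution $P\to X$; then $P$ is a bounded-above, exact (a pure-quasi-isomorphism is a quasi-isomorphism) complex of pure-projectives lying in $\mathbf{K}^{-,ppb}(\mathcal{PP})$ with some vanishing bound $a$. \emph{Here the hypothesis is used:} running downward induction from the top degree along the exact sequences $0\to Z^{i}\to P^{i}\to Z^{i+1}\to 0$, with $Z^{i}=\mathrm{Im}\,d^{i-1}_{P}=\mathrm{Ker}\,d^{i}_{P}$, and invoking that $\mathcal{PP}$ is closed under kernels of epimorphisms, one concludes that \emph{every} syzygy $Z^{i}$ is pure-projective. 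Consequently, for $m\le a$ the short exact sequence of complexes $0\to P_{<m}\to P\to Y_{m}\to 0$, with $Y_{m}=\bigl(0\to Z^{m}\hookrightarrow P^{m}\to\cdots\to P^{b}\to 0\bigr)$ and $P_{<m}$ the complementary tail, is split in each degree, hence gives a triangle $P_{<m}\to P\to Y_{m}\to P_{<m}[1]$; now $Y_{m}\in\mathbf{K}^{b}_{ac}(\mathcal{PP})$, while tracking pure-cohomology through this triangle (using the bound $a$ and that $Y_{m}$ is bounded) shows $\mathrm{H}^{i}\mathrm{Hom}_{R}(Q,P_{<m})=0$ for all $i$ and all $Q\in\mathcal{PP}$, i.e.\ $P_{<m}$ is pure-acyclic. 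Hence $X\cong P\cong Y_{m}$ in $\mathbf{D}^{b}_{pur}(R)$, as required. Combined with Step~1 this also yields the right-hand equivalence $\mathbf{D}^{b}_{pur}(R)/\mathbf{K}^{b}_{ac}(\mathcal{PP})\simeq\mathbf{K}^{-,ppb}(\mathcal{PP})/\mathbf{K}^{b}_{ac}(\mathcal{PP})$.

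I expect the main obstacle to be the argument in Step~2: first forcing the syzygies of the resolution $P$ to be pure-projective (the only place where the closure hypothesis is essential), and then checking that the truncated tail $P_{<m}$ is pure-acyclic by a careful count of pure-cohomology in the triangle above. A secondary point that must be handled with care throughout is the meaning of ``$\mathbf{K}^{b}_{ac}(\mathcal{PP})$ as a subcategory of $\mathbf{D}^{b}_{pur}(R)$'' (and of $\mathbf{K}^{-,ppb}(\mathcal{PP})$): since bounded exact — as opposed to bounded \emph{pure}-exact — complexes of pure-projectives need not be contractible, this has to be read as the thick triangulated subcategory generated by the images of such complexes, and one should record that a bounded pure-exact complex of pure-projectives \emph{is} contractible, so that this subcategory behaves correctly in the quotient.
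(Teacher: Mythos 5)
Your proposal is correct, and it reaches the theorem by a genuinely different route from the paper, although the two arguments share their technical core. The paper constructs the composite functor $\overline{\eta}\colon \mathbf{K}^{-,ppb}(\mathcal{PP})/\mathbf{K}^{b}_{ac}(\mathcal{PP})\to\mathbf{D}^{b}(R)$ directly and checks that it is dense, full and faithful: density and fullness rest on two comparison lemmas (Lemmas \ref{lem 3.1} and \ref{lem 3.2}), which modify a complex in $\mathbf{K}^{-,b}(\mathcal{P})$ below a suitable degree by a pure-projective resolution of a syzygy and lift chain maps accordingly, while faithfulness rests on Lemma \ref{lem 3.4}. You instead take as input the equivalence $\mathbf{D}^{b}_{pur}(R)\simeq\mathbf{K}^{-,ppb}(\mathcal{PP})$ (this is \cite[Theorem 3.6(1)]{ZH16}; your Step 1 essentially reproves it) together with Verdier's iterated-quotient formula $\mathbf{D}^{b}(R)\simeq\mathbf{D}^{b}_{pur}(R)/(\mathbf{K}^{b}_{ac}(R)/\mathbf{K}^{b}_{pac}(R))$ --- which is precisely the paper's own Remark \ref{rem 3.1}(1) --- and reduce everything to showing that, under the hypothesis, the images of $\mathbf{K}^{b}_{ac}(R)$ and of $\mathbf{K}^{b}_{ac}(\mathcal{PP})$ generate the same thick subcategory of $\mathbf{D}^{b}_{pur}(R)$. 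Your syzygy argument for this --- all kernels $Z^{i}$ of an acyclic $P\in\mathbf{K}^{-,ppb}(\mathcal{PP})$ are pure-projective by downward induction from the top degree, the sequences $0\to Z^{i-1}\to P^{i-1}\to Z^{i}\to 0$ are pure below the bound and hence split, so $P$ decomposes as a bounded acyclic complex of pure-projectives plus a pure-acyclic (hence contractible, by the paper's Lemma \ref{lem 3.3}) tail --- is exactly the content of the paper's Lemma \ref{lem 3.4}, just applied to a pure-projective resolution of a bounded acyclic complex rather than to an acyclic mapping cone. What your route buys is that the comparison Lemmas \ref{lem 3.1} and \ref{lem 3.2} become unnecessary; what it costs is that you must invoke (or reprove) the full strength of the Zheng--Huang equivalence, which the paper only needs later, in Section 4. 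Two small points to tidy: the isomorphism $X\cong Y_{m}$ is first produced in $\mathbf{D}^{-}_{pur}(R)$, where the unbounded-below resolution $P$ lives, and should be transported back via the full faithfulness of $\mathbf{D}^{b}_{pur}(R)\to\mathbf{D}^{-}_{pur}(R)$; and, as you already note, the denominators must be read as the triangulated subcategories generated by the indicated objects, which is harmless because a Verdier quotient is unchanged upon replacing the denominator by its thick closure.
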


Next, some examples of rings such that the subcategory $\mathcal{PP}$ of pure-projective $R$-modules is closed under kernels of epimorphisms will be given. However, due to the following counterexamples suggested by M. Hrbek, the assumption is fairly strong. For example, if the ring $R$ is not coherent, then for any finitely presented module $F$, its finitely generated, by not finitely presented, submodule $G$ will give a counterexample, as the epimorphism $F\rightarrow F/G$ has the kernel $G$ which is not pure-projective. It follows from \cite{Bru79} that for an artin algebra $A$, maximal submodules of pure-projective modules are pure-projective if and only if $A$ is of finite representation type, and then there are counterexamples of the above assumption for artin algebras of non-finite representation type. We look forward to weakening this assumption.

\begin{remark}\label{rem 3.1}
\indent$(1)$ It follows immediately from \cite[Corollary 4.3]{Ver97} that for $\ast \in \{{blank, -, +, b}\}$, there is an equivalence of triangulated categories $\mathbf{D}^{\ast}(R)\simeq \mathbf{D}^{\ast}_{pur}(R)/(\mathbf{K}^{\ast}_{ac}(R)/ \mathbf{K}^{\ast}_{pac}(R))$.
Moreover, it is easy to see that $\mathbf{D}^{\ast}(R)\simeq \mathbf{D}^{\ast}_{pur}(R)$ if and only if $\mathbf{K}_{ac}^{\ast}(R)\simeq \mathbf{K}^{\ast}_{pac}(R)$, if and only if $\mathcal{PP}=\mathcal{P}$. It is clear that when $R$ is von Neumann regular, these conditions hold.\\
\indent$(2)$ Let $R$ be a commutative ring. It follows from \cite[Theorem 4.3]{Gri70} that every $R$-module is pure-projective if and only if $R$ is  an artinian principal ideal ring, if and only if $R$ is a generalized uniserial quasi-Frobenius ring. In this case $\mathcal{PP}$ is obviously closed under kernels of epimorphisms. The rings with the above properties are also called pure-semisimple, and are characterized in
\cite[Theorem 8.4]{JL89}.\\
\indent$(3)$ Recall that Kulikov proved in 1945 that subgroups of finitely generated abelian groups are again direct sums of finitely generated groups. It follows from a version of Kulikov's theorem \cite[Theorem 2.1]{Bru83} that there are some rings with Kulikov property (the heredity of pure-projectivity), i.e. any submodule of pure-projective module is pure-projective. For example, consider the oriented cycle $\xymatrix{\Gamma_{n}: 1\ar[r] &2\ar[r] &\cdots \ar[r] &n \ar@/_1pc/[lll]}$ of length $n\geq 1$, and then the path algebra $k\Gamma_{n}$ of this quiver over some field $k$ admits Kulikov property as the category of $k\Gamma_{n}$-modules is isomorphic to the category of $k$-linear representations of $\Gamma_{n}$; the $k$-linearization $k[\mathbb{N}]$ of the ordered set $\mathbb{N}$ has Kulikov property; let $R$ be a Dedekind domain which is not a field, then $R$ has Kulikov property;  see \cite[pp. 34]{Bru83} and \cite[Example 11.19]{JL89}.
\end{remark}

In order to prove Theorem \ref{thm 3.1}, we need to make some preparations. Let $\mathbf{K}^{-,b}(\mathcal{P})$ be the homotopy category of upper bounded complexes of projective modules with only finitely many non-zero cohomologies.

\begin{lemma}\label{lem 3.1}
Let $X\in \mathbf{K}^{-,b}(\mathcal{P})$. Then there exists a quasi-isomorphism $X\rightarrow P$ with $P\in\mathbf{K}^{-,ppb}(\mathcal{PP})$.
\end{lemma}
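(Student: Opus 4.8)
The plan is to leave $X$ untouched in the degrees where it carries cohomology and, below that, to replace the ordinary projective resolution that automatically sits there by a \emph{pure}-projective resolution of the appropriate boundary module. Since $X$ is bounded above with finitely many non-zero cohomologies, I would first fix $m$ with $X^i=0$ for $i>m$ and an integer $n$ with $\mathrm{H}^i(X)=0$ for all $i<n$ (if $X$ is acyclic it is a bounded-above acyclic complex of projectives, hence contractible, and one may take $P=0$). Put $Z^n=\mathrm{Ker}\,d^n_X$ and $B^n=\mathrm{Im}\,d^{n-1}_X\subseteq X^n$; the vanishing of $\mathrm{H}^i(X)$ for $i<n$ says exactly that $\cdots\to X^{n-2}\to X^{n-1}\to B^n\to 0$ is exact, i.e. it is a projective resolution of $B^n$.

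Next I would choose a pure-projective resolution $\cdots\to Q_1\to Q_0\xrightarrow{\varepsilon}B^n\to 0$, which exists for any module by the discussion in Section~2, and define $P$ by setting $P^i=X^i$ with the differentials of $X$ for $i\geq n$, $P^i=Q_{n-1-i}$ with the resolution differentials for $i\leq n-1$, and glueing the two halves through the composite $\partial:=(B^n\hookrightarrow X^n)\circ\varepsilon\colon P^{n-1}=Q_0\to X^n=P^n$. That $P$ is a complex is immediate: $d^n_X\circ\partial=0$ because $B^n\subseteq Z^n$, and $\partial$ annihilates $\mathrm{Im}(Q_1\to Q_0)=\mathrm{Ker}\,\varepsilon$. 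All $P^i$ are projective or pure-projective, and $P$ is bounded above, so $P\in\mathbf{K}^{-}(\mathcal{PP})$.

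For the quasi-isomorphism $\phi\colon X\to P$ I would take $\phi^i=\mathrm{id}$ for $i\geq n$, and for $i\leq n-1$ the components of a comparison chain map from the projective resolution $(\cdots\to X^{n-2}\to X^{n-1})$ to the (exact) resolution $(\cdots\to Q_1\to Q_0)$ over $\mathrm{id}_{B^n}$, which the comparison theorem supplies. By construction of $\partial$ these assemble into a chain map; on cohomology it is the identity in degrees $>n$, it identifies $\mathrm{H}^n(X)=Z^n/B^n$ with $\mathrm{H}^n(P)=Z^n/\mathrm{Im}\,\partial=Z^n/B^n$, and both sides vanish in degrees $<n$, so $\phi$ is a quasi-isomorphism. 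Finally, to see $P\in\mathbf{K}^{-,ppb}(\mathcal{PP})$, fix $Q\in\mathcal{PP}$: applying $\mathrm{Hom}_R(Q,-)$ to the pure-exact resolution $\cdots\to Q_1\to Q_0\to B^n\to 0$ gives an exact complex, and $\mathrm{Hom}_R(Q,B^n)\hookrightarrow\mathrm{Hom}_R(Q,X^n)$ because $B^n\hookrightarrow X^n$ is monic; together these force $\mathrm{Hom}_R(Q,P)$ to be exact in every degree $\leq n-1$, which is the required condition with the integer $n-1$.

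The construction is elementary; the only point demanding care is the junction in degrees $n-1$ and $n$ — namely checking that $\partial$ is compatible with the differentials on both sides, that the glued $\phi$ really is a chain map inducing an isomorphism on $\mathrm{H}^n$, and that it is precisely the injectivity of $\mathrm{Hom}_R(Q,B^n)\to\mathrm{Hom}_R(Q,X^n)$ that makes the pure cohomology vanish up to degree $n-1$. Everything away from this junction is either unchanged from $X$ or handled verbatim by the comparison theorem and the definition of pure-exactness; in particular the argument uses nothing about $R$ beyond the existence of pure-projective resolutions.
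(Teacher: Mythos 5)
Your proposal is correct and follows essentially the same route as the paper: truncate $X$ below its cohomology, replace the projective tail by a pure-projective resolution of the relevant boundary/syzygy module (the paper resolves $\mathrm{Ker}\,d_X^{n}$ at a degree where $\mathrm{H}^{n}(X)=0$, you resolve $\mathrm{Im}\,d_X^{n-1}$ at the lowest degree carrying cohomology — the same construction up to an index shift), and use the comparison theorem to produce the quasi-isomorphism. Your verification of the junction and of the pure-cohomology vanishing via the injectivity of $\mathrm{Hom}_R(Q,B^n)\rightarrow\mathrm{Hom}_R(Q,X^n)$ is exactly the detail the paper leaves implicit.
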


\begin{proof}
For $X\in\mathbf{K}^{-,b}(\mathcal{P})$, there is an integer $n\in\mathbb{Z}$ such that $\mathrm{H}^{i}(X)=0$ for $i\leq n$. For $\mathrm{Ker}d_{X}^{n}$ there is a pure-projective resolution
$\cdots\rightarrow P^{n-2}\rightarrow P^{n-1}\rightarrow \mathrm{Ker}d_{X}^{n}\rightarrow 0$.
By a version of comparison theorem, we can get a cochain map
$$\xymatrix{
X=\ar[d]_{f}& \cdots \ar[r] &X^{n-2} \ar[r]\ar[d] & X^{n-1} \ar[r] \ar[d] &X^{n} \ar[r]^{d_{X}^{n}}\ar[d]_{\|} & X^{n+1} \ar[r]\ar[d]_{\|} &\cdots\\
P=& \cdots \ar[r] &P^{n-2}\ar[r] &P^{n-1}\ar[r] &X^{n}\ar[r] &X^{n+1}\ar[r] & \cdots
}$$
From the construction, it is direct that $f$ is a quasi-isomorphism and $P\in\mathbf{K}^{-,ppb}(\mathcal{PP})$.
\end{proof}

\begin{lemma}\label{lem 3.2}
Let $X\in \mathbf{K}^{-,b}(\mathcal{P})$ and $G\in \mathbf{K}^{-,ppb}(\mathcal{PP})$. Then for any cochain map $g: X \rightarrow G$, there is a quasi-isomorphism $f: X\rightarrow P$ with $P\in\mathbf{K}^{-,ppb}(\mathcal{PP})$, and a cochain map $h: P\rightarrow G$, such that $g\sim hf$.
\end{lemma}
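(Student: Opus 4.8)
\textbf{Proof proposal for Lemma \ref{lem 3.2}.}

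The plan is to apply Lemma \ref{lem 3.1} together with the mapping-cone machinery, using the fact that $\mathbf{K}^{-,ppb}(\mathcal{PP})$ is closed under the relevant operations. First I would invoke Lemma \ref{lem 3.1} to produce a quasi-isomorphism $f\colon X\rightarrow P$ with $P\in\mathbf{K}^{-,ppb}(\mathcal{PP})$; note that $\mathrm{Con}(f)$ is then an exact complex. The goal is to factor $g$ through $f$ up to homotopy, so I need a cochain map $h\colon P\rightarrow G$ with $hf\sim g$. The natural strategy is to show that $\mathrm{Hom}_R(\mathrm{Con}(f),G)$ is exact in the appropriate range, which forces the map $g$ (viewed as an element detecting whether $f^{*}\colon \mathrm{Hom}_{\mathbf{K}}(P,G)\rightarrow \mathrm{Hom}_{\mathbf{K}}(X,G)$ hits $g$) to lift.

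The key step is a lifting/extension argument along the exact complex $\mathrm{Con}(f)$. Concretely, since $X\in\mathbf{K}^{-,b}(\mathcal{P})$ consists of projectives and $P\in\mathbf{K}^{-,ppb}(\mathcal{PP})$, the cone $\mathrm{Con}(f)$ is an upper-bounded exact complex whose terms are of the form $X^{n+1}\oplus P^{n}$; I would want to argue that $\mathrm{Con}(f)$ is homotopy equivalent to an upper-bounded complex built from modules that are still "nice" relative to $G$ — the cleanest route is that an upper-bounded exact complex of projectives (here the $X$-part) is contractible, while the surviving contribution lies in $\mathbf{K}^{-,ppb}(\mathcal{PP})$ and is exact, hence in $\mathbf{K}^{b}_{ac}(\mathcal{PP})$ after truncation. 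More directly: because $f$ is a quasi-isomorphism between complexes of projectives (for $X$) and pure-projectives (for $P$) that agree in high degrees, $\mathrm{Con}(f)$ is a bounded-above exact complex, so one constructs a contracting homotopy degree by degree using projectivity of the $X^{n}$ and the fact that one is mapping into an exact complex; the upshot is that $\mathrm{Hom}_{\mathbf{K}}(\mathrm{Con}(f),G)=0$. Applying $\mathrm{Hom}_R(-,G)$ to the triangle $X\xrightarrow{f}P\rightarrow\mathrm{Con}(f)\rightarrow X[1]$ and taking cohomology, the vanishing of $\mathrm{Hom}_{\mathbf{K}}(\mathrm{Con}(f),G)$ and $\mathrm{Hom}_{\mathbf{K}}(\mathrm{Con}(f)[-1],G)$ shows $f^{*}\colon\mathrm{Hom}_{\mathbf{K}}(P,G)\rightarrow\mathrm{Hom}_{\mathbf{K}}(X,G)$ is bijective; in particular the homotopy class of $g$ lies in its image, giving $h\colon P\rightarrow G$ with $hf\sim g$.

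I expect the main obstacle to be verifying that $\mathrm{Hom}_{\mathbf{K}}(\mathrm{Con}(f),G)$ indeed vanishes. The subtlety is that $\mathrm{Con}(f)$, although exact and bounded above, is not a complex of projectives (its terms mix $X^{n+1}$, which is projective, with $P^{n}$, which is only pure-projective); so one cannot simply cite "bounded-above exact complex of projectives is contractible." The way around this is to exploit the specific shape of $f$ coming from Lemma \ref{lem 3.1}: in high degrees $f$ is the identity, so $\mathrm{Con}(f)$ is contractible in those degrees, and in the remaining (finitely many, then bounded-above) degrees the cone is a bounded-above exact complex in which the $X$-entries are projective. One then builds the contracting homotopy inductively from the top, at each stage splitting off the projective summand $X^{n+1}$ using that we map into the exact (hence, in the relevant window, "resolution-like") complex $G$, while the pure-projective summands $P^{n}$ are handled because $G\in\mathbf{K}^{-,ppb}(\mathcal{PP})$ is bounded-above with pure-projective terms and $\mathrm{H}^{i}\mathrm{Hom}_{R}(P',G)=0$ for $i$ small — so the relevant $\mathrm{Hom}$-complex is exact in the range where it matters. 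Assembling these local homotopies yields the global null-homotopy, completing the argument. A clean alternative, if one prefers to avoid the explicit homotopy, is to note that $\mathrm{Con}(f)$ lies in the thick subcategory generated by $\mathbf{K}^{-,b}(\mathcal{P})$-objects and bounded exact pure-projective complexes, on which $\mathrm{Hom}_{\mathbf{K}}(-,G)$ vanishes by dévissage from the two obvious base cases.
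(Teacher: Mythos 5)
Your proposal is, at bottom, the paper's own argument recast in triangulated language: the paper constructs $h$ and the null-homotopy of $g-hf$ degree by degree, which is exactly the computation that underlies your claim that $\mathrm{Hom}_{\mathbf{K}}(\mathrm{Con}(f),G[k])=0$ for the relevant shifts, and you correctly identify the two mechanisms (projectivity of the $X^{i}$ against ordinary exactness of $G$ in low degrees for the homotopy, pure-projectivity of the $P^{i}$ against pure-exactness of $G$ in low degrees for the lifting). So the route is the same; the cone formulation buys nothing and costs a little, since you must also control $\mathrm{Hom}_{\mathbf{K}}(\mathrm{Con}(f),G)$ itself to claim bijectivity, whereas only surjectivity of $f^{*}$ is needed.

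There is, however, one point where your write-up has a genuine gap rather than a stylistic difference: you ``first invoke Lemma \ref{lem 3.1}'' to get $f\colon X\to P$, i.e.\ you fix $f$ before $G$ enters the picture. Lemma \ref{lem 3.1} as stated only guarantees \emph{some} cutoff $n\leq n(X)$ below which $X$ is replaced by a pure-projective resolution of $\mathrm{Ker}\,d_{X}^{n}$. If that cutoff is larger than $n(G)$, then $P$ differs from $X$ in degrees where $G$ need not be exact, let alone pure-exact, and both lifting arguments break down: a map out of the pure-projective (non-projective) term $P^{j}$ cannot be lifted along $G^{j-1}\to\mathrm{Ker}\,d_{G}^{j}$ unless that surjection is pure epic, which is only guaranteed for $j\leq n(G)$. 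Your phrase ``exact in the range where it matters'' gestures at this but nothing in your construction arranges it. The fix is exactly what the paper does: set $n=\min\{n(X),n(G)\}$ and rerun the \emph{construction} in the proof of Lemma \ref{lem 3.1} with this $n$, so that $P^{i}=X^{i}$ and $f^{i}=\mathrm{Id}$ for all $i\geq n$; only then are the degrees where $\mathrm{Con}(f)$ fails to be the cone of an identity confined to the window where $G$ behaves like a pure-projective resolution. Finally, I would drop the closing ``clean alternative'': $\mathrm{Hom}_{\mathbf{K}}(-,G)$ does not vanish on all of $\mathbf{K}^{-,b}(\mathcal{P})$, nor obviously on $\mathbf{K}^{b}_{ac}(\mathcal{PP})$, so the proposed d\'evissage has no sound base cases as stated.
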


\begin{proof}
For $X\in \mathbf{K}^{-,b}(\mathcal{P})$ and $G\in \mathbf{K}^{-,ppb}(\mathcal{PP})$, there exist integers $n(X)$ and $n(G)$, such that
$\mathrm{H}^{i}\mathrm{Hom}_{R}(M,X)=0$ for any $i\leq n(X)$ and any $M\in \mathcal{P}$, and $\mathrm{H}^{j}\mathrm{Hom}_{R}(N,G)=0$ for any $j\leq n(G)$ and any $N\in \mathcal{PP}$. Let $n= \mathrm{min}\{n(X), n(G)\}$. By the construction in the above lemma, we get a quasi-isomorphism $f: X\rightarrow P$, where $P\in\mathbf{K}^{-,ppb}(\mathcal{PP})$ with $P^i=X^i$ for $i\geq n$.

For any $i\geq n$, $f^{i}= \mathrm{Id}_{X^{i}}$, and let $h^i = g^i$. Since $G\in \mathbf{K}^{-,ppb}(\mathcal{PP})$, the sequence
$$0\longrightarrow \mathrm{Ker}d_{G}^{n-1}\longrightarrow G^{n-1}\longrightarrow \mathrm{Ker}d_{G}^{n}\longrightarrow 0$$ is pure-exact, and then it remains exact by applying $\mathrm{Hom}_{R}(P^{n-1},-)$. It follows from $$d_{G}^{n}h^{n}d_{P}^{n-1} = d_{G}^{n}g^{n}d_{P}^{n-1}= g^{n+1}d_{P}^{n}d_{P}^{n-1} = 0$$ that
$h^{n}d_{P}^{n-1}\in \mathrm{Hom}_{R}(P^{n-1}, \mathrm{Ker}d_{G}^{n})$. This yields a morphism $h^{n-1}\in \mathrm{Hom}_{R}(P^{n-1}, G^{n-1})$
such that $d_{G}^{n-1}h^{n-1} = h^{n}d_{P}^{n-1}$. Inductive, we get morphisms $h^{j}: P^{j}\rightarrow G^{j}$ ($j<n$) such that $d_{G}^{j-1}h^{j-1} = h^{j}d_{P}^{j-1}$. Hence we have a cochain map $h: P\rightarrow G$.

Now consider the following diagram:
$$\xymatrix@R=20pt@C=10pt{
X= \cdots \ar[r] &X^{n-2}\ar[rr]\ar[ddd]_{g^{n-2}}\ar[rd]^{f^{n-2}} &&X^{n-1}\ar[rr]\ar[ddd]_{g^{n-1}}\ar[rd]^{f^{n-1}} &&X^{n}\ar[rr]\ar[ddd]_{g^{n}}\ar[rd]^{\mathrm{Id}} &&X^{n+1}\ar[r]\ar[ddd]_{g^{n+1}}\ar[rd]^{\mathrm{Id}} &\cdots\\
P= \cdots \ar[rr] &&P^{n-2}\ar[rr]\ar@{-->}[ldd]^{h^{n-2}} &&P^{n-1}\ar[rr]\ar@{-->}[ldd]^{h^{n-1}} &&X^{n}\ar[rr]\ar[ldd]^{h^{n}} &&X^{n+1}\ar[r]\ar[ldd]^{h^{n+1}} &\cdots\\
\\
G= \cdots \ar[r] &G^{n-2}\ar[rr] &&G^{n-1}\ar[rr] &&G^{n}\ar[rr] &&G^{n+1}\ar[r] &\cdots
}$$
It is easy to see that $g^{i} - h^{i}f^{i}=0$ for $i\geq n$. Since $G$ is exact at the degree less than $n$, the sequence
$0\rightarrow \mathrm{Ker}d_{G}^{n-2}\rightarrow G^{n-2}\rightarrow \mathrm{Ker}d_{G}^{n-1}\rightarrow 0$ is exact. Note that
$d_{G}^{n-1}(g^{n-1}- h^{n-1}f^{n-1})=0$, then $g^{n-1}- h^{n-1}f^{n-1}\in \mathrm{Hom}_{R}(X^{n-1}, \mathrm{Ker}d_{G}^{n-1})$.
Since $X^{n-1}$ is projective, there exists a map $s^{n-1}: X^{n-1}\rightarrow G^{n-2}$ such that $g^{n-1}- h^{n-1}f^{n-1} = d_{G}^{n-2}s^{n-1}$. By induction we get homotpy maps $s: X\rightarrow G[-1]$ with $s^{i}=0$ for any $i\geq n$. Hence $g-hf: X\rightarrow G$ is null homotopy. This completes the proof.
\end{proof}

\begin{lemma}\label{lem 3.3}
Let $P\in \mathbf{K}^{-}(\mathcal{PP})$. If $P$ is pure-exact, then $P = 0$ in $\mathbf{K}(R)$.
\end{lemma}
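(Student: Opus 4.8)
The plan is to show that any upper-bounded complex $P$ of pure-projective modules which is pure-exact is contractible, i.e. zero in $\mathbf{K}(R)$; this is the pure analogue of the classical fact that a bounded-above acyclic complex of projectives splits. The guiding idea is that pure-exactness of $P$ means each truncation $0 \to K^n \to P^n \to K^{n+1} \to 0$ (with $K^n = \mathrm{Ker}\, d_P^n$) is a pure-exact sequence, and since $P^{n-1}$ is pure-projective, the epimorphism $P^{n-1} \twoheadrightarrow K^n$ is actually split once we know $K^n$ is a direct summand of something — more precisely, we build the contracting homotopy degree by degree from the top.

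First I would fix $n_0$ with $P^i = 0$ for $i > n_0$, so $K^{n_0+1} = 0$ and $d_P^{n_0}\colon P^{n_0} \to P^{n_0+1}=0$ forces $K^{n_0} = P^{n_0}$. Then I would argue inductively downward: suppose we have already constructed maps $s^{i}\colon P^{i} \to P^{i-1}$ for $i > n$ giving the homotopy identity in those degrees, and in particular suppose the pure-exact sequence $0 \to K^{n+1} \to P^{n+1} \xrightarrow{d^{n+1}} K^{n+2} \to 0$ has been split, so that $K^{n+1}$ is a direct summand of $P^{n+1}$ and hence pure-projective. Now look at the pure-exact sequence $0 \to K^{n} \to P^{n} \xrightarrow{\bar d^{n}} K^{n+1} \to 0$. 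Since $K^{n+1}$ is pure-projective, $\mathrm{Hom}_R(K^{n+1}, -)$ is exact on this pure-exact sequence, so the surjection $P^n \twoheadrightarrow K^{n+1}$ splits; a splitting $\sigma^{n+1}\colon K^{n+1} \to P^n$ then yields the desired homotopy component and also exhibits $K^n$ as a direct summand of $P^n$, keeping the induction going. Writing $s^{n+1}\colon P^{n+1}\to P^n$ as the composite $P^{n+1} \xrightarrow{\bar d^{n+1}} K^{n+1} \xrightarrow{\sigma^{n+1}} P^n$ (adjusted on the already-split complement), one checks $d^{n}_P s^{n+1} + s^{n+2} d^{n+1}_P = \mathrm{Id}_{P^{n+1}}$ in the usual way. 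Assembling the $s^n$ over all $n$ gives a contracting homotopy, so $\mathrm{Id}_P \sim 0$ and $P = 0$ in $\mathbf{K}(R)$.

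Alternatively, and perhaps more cleanly, one can phrase this without an explicit homotopy: a pure-exact complex $P$ is in particular pure-acyclic, so $\mathrm{Hom}_R(Q, P)$ is acyclic for every pure-projective $Q$; taking $Q = P^i$ for each $i$ shows that the identity map of $P$ is null-homotopic by the standard lifting argument for complexes built from objects $Q$ with $\mathrm{Hom}_R(Q, P)$ acyclic, using that $P$ is bounded above so the induction terminates. Either way, the content is the same: pure-projectivity converts pure-exactness into genuine splitness.

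The main obstacle — really the only subtle point — is the bookkeeping that ensures $K^n$ stays pure-projective at each stage, which is exactly what lets us apply $\mathrm{Hom}_R(K^{n+1},-)$ to the next pure-exact sequence and split it; this is where upper-boundedness is essential, since it provides the base case $K^{n_0} = P^{n_0}$ (trivially pure-projective) to start the downward induction. I do not expect to need the hypothesis that $\mathcal{PP}$ is closed under kernels of epimorphisms here — direct summands of pure-projectives are pure-projective by Warfield's characterization, and that is all that is used — so this lemma holds for an arbitrary ring $R$.
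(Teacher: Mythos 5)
Your proof is correct and follows essentially the same route as the paper: both split each pure-exact sequence $0\to K^{n}\to P^{n}\to K^{n+1}\to 0$ by a downward induction starting from $K^{n_0}=P^{n_0}$, using that direct summands of pure-projectives are pure-projective, and conclude that $P$ decomposes into contractible pieces (equivalently, admits a contracting homotopy). Your observation that the hypothesis on $\mathcal{PP}$ being closed under kernels of epimorphisms is not needed here is also consistent with the paper, which states the lemma for an arbitrary ring.
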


\begin{proof}
Without loss of generality, we assume that
$$P = \cdots \longrightarrow P^{-2}\stackrel{d^{-2}}\longrightarrow P^{-1}\stackrel{d^{-1}}\longrightarrow P^0\longrightarrow 0.$$
Set $P^0 = \mathrm{Ker}d^0$, and consider the sequences $0\rightarrow \mathrm{Ker}d^{i-1}\rightarrow P^{i-1}\rightarrow \mathrm{Ker}d^{i}\rightarrow 0$, $i\leq 0$. These sequences are pure-exact with each module pure-projective, and hence they are split. This yields that
$P^{i}\cong \mathrm{Ker}d^i \oplus \mathrm{Ker}d^{i+1}$, and $P$ is a direct sum of contractible complexes of the form
$\cdots \rightarrow 0\rightarrow \mathrm{Ker}d^{i}\stackrel{=}\rightarrow \mathrm{Ker}d^{i}\rightarrow 0\rightarrow \cdots$.
Thus $P = 0$ in $\mathbf{K}(R)$.
\end{proof}

\begin{lemma}\label{lem 3.4}
Let $P\in \mathbf{K}^{-,ppb}(\mathcal{PP})$. If $\mathcal{PP}$ is closed under kernels of epimorphisms and $P$ is acyclic, then $P\in\mathbf{K}^{b}_{ac}(\mathcal{PP})$.
\end{lemma}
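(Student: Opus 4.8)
\textbf{Proof proposal for Lemma \ref{lem 3.4}.}
The plan is to show that an acyclic complex $P$ of pure-projective modules which is bounded above and which has bounded-below pure-cohomology vanishing (the defining property of $\mathbf{K}^{-,ppb}(\mathcal{PP})$) must in fact be a \emph{bounded} complex up to homotopy, with all its kernel modules pure-projective. So suppose $P=\cdots\to P^{m-1}\xrightarrow{d^{m-1}} P^m\to\cdots\to P^{t}\to 0$ is acyclic with $P^i\in\mathcal{PP}$, and let $n=n(P)$ be an integer with $\mathrm{H}^i\mathrm{Hom}_R(Q,P)=0$ for all $i\leq n$ and all $Q\in\mathcal{PP}$; this exactly says that $P$ is pure-exact in all degrees $\leq n$. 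Writing $K^i=\mathrm{Ker}\,d^i$, acyclicity gives short exact sequences $0\to K^i\to P^i\to K^{i+1}\to 0$ for every $i$. For $i\leq n$ these sequences are pure-exact, hence split (the surjection onto $K^{i+1}$ is a pure epimorphism onto a module receiving a map from the pure-projective $P^i$; more directly, a pure-exact sequence with pure-projective middle term splits), so each $K^i$ with $i\leq n$ is a direct summand of $P^i$ and therefore pure-projective.

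First I would use these splittings to trim off the tail: exactly as in Lemma \ref{lem 3.3}, the subcomplex $\cdots\to P^{n-2}\to P^{n-1}\to K^n\to 0$ decomposes as a direct sum of contractible complexes $\cdots 0\to K^i\xrightarrow{=}K^i\to 0\cdots$, so it is null-homotopic, i.e. zero in $\mathbf{K}(R)$. Hence in $\mathbf{K}(R)$ the complex $P$ is isomorphic to its brutal truncation $P_{\geq n}:\; 0\to K^n\to P^n\to P^{n+1}\to\cdots\to P^t\to 0$, which is bounded. Now I must check that $P_{\geq n}$ is a complex of pure-projective modules; the only term in question is the new leftmost term $K^n$, and we have just seen $K^n\in\mathcal{PP}$.

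It remains to verify that $P_{\geq n}$ is still acyclic — which is clear, since we only removed a contractible (hence acyclic) subcomplex, and more concretely $P_{\geq n}$ has homology $\mathrm{H}^i(P)$ in degrees $i>n$ (all zero by hypothesis) and $\mathrm{H}^n(P_{\geq n})=\mathrm{Ker}(K^n\to P^n)/0=0$ because $K^n\hookrightarrow P^n$. Therefore $P_{\geq n}$ is a bounded acyclic complex of pure-projective modules, so $P_{\geq n}\in\mathbf{K}^b_{ac}(\mathcal{PP})$, and since $P\cong P_{\geq n}$ in $\mathbf{K}(R)$ we conclude $P\in\mathbf{K}^b_{ac}(\mathcal{PP})$.

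I expect the only real point requiring care — and the only place the hypothesis that $\mathcal{PP}$ is closed under kernels of epimorphisms is used — is establishing that the \emph{new} leftmost kernel $K^n$ is pure-projective; above I deduced this from the splitting of the pure-exact sequence in degree $n$, but one should double-check whether the intended argument instead uses the closure hypothesis to propagate pure-projectivity of kernels $K^i$ for $i>n$ (where the sequences $0\to K^i\to P^i\to K^{i+1}\to 0$ need not be pure-exact, only exact). In fact the cleaner route is: the hypothesis on $\mathcal{PP}$ gives $K^i\in\mathcal{PP}$ for \emph{all} $i$ by descending induction from the top (each $K^i=\mathrm{Ker}(P^i\twoheadrightarrow K^{i+1})$ with $K^{i+1}\in\mathcal{PP}$, or $K^{i+1}\subseteq P^{i+1}$ a kernel of an epi when $i+1<t$, and $K^t=P^t$), so that in particular $K^n\in\mathcal{PP}$ without invoking purity at degree $n$ at all; the pure-exactness at degrees $\leq n$ is then used only for the splitting that kills the tail. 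I would present it this way to make the role of each hypothesis transparent.
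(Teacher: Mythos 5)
Your proof is correct and follows essentially the same route as the paper: the closure hypothesis gives all kernels $K^i$ pure-projective by descending induction from the top, the vanishing of $\mathrm{H}^{i}\mathrm{Hom}_{R}(Q,P)$ in low degrees then makes the low-degree short exact sequences pure-exact with pure-projective cokernel, hence split, so $P$ decomposes in $\mathbf{K}(R)$ into a contractible tail (killed exactly as in Lemma \ref{lem 3.3}) plus a bounded acyclic complex of pure-projectives. Do make the ``cleaner route'' of your last paragraph the actual argument: the assertion in your first paragraph that a pure-exact sequence with pure-projective \emph{middle} term splits is false in general (one needs the cokernel pure-projective, which is precisely what the closure hypothesis supplies), and note the harmless reindexing that $\mathrm{H}^{i}\mathrm{Hom}_{R}(Q,P)=0$ for $i\leq n$ yields pure-exactness of $0\to K^{i}\to P^{i}\to K^{i+1}\to 0$ only for $i\leq n-1$.
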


\begin{proof}
For $P\in \mathbf{K}^{-,ppb}(\mathcal{PP})$, there exists an integer $n\in\mathbb{Z}$ such that $\mathrm{H}^{i}\mathrm{Hom}_{R}(Q, P)=0$ for any $i\leq n$ and any pure-projective module $Q$. Let
$$P^{'}= \cdots\longrightarrow 0\longrightarrow 0\longrightarrow \mathrm{Im}d^{n}\longrightarrow P^{n+1}\longrightarrow P^{n+2}\longrightarrow \cdots,$$
$$P^{''}=  \cdots\longrightarrow P^{n-2}\longrightarrow P^{n-1}\longrightarrow \mathrm{Ker}d^{n}\longrightarrow 0\rightarrow 0\longrightarrow \cdots.$$
We have an exact sequence of complexes $0\rightarrow P^{''}\stackrel{f}\rightarrow P \stackrel{g}\rightarrow P^{'}\rightarrow 0$. By the assumption,  $\mathcal{PP}$ is closed under kernels of epimorphisms, and $P$ is acyclic and upper bounded, then $\mathrm{Im}d^{n}$ and $\mathrm{Ker}d^{n}$ are pure-projective. Moreover, $\mathrm{H}^{n}\mathrm{Hom}_{R}(Q, P)=0$ for any pure-projective module $Q$. This implies that the sequence $0\rightarrow \mathrm{Ker}d^{n} \rightarrow P^n\rightarrow \mathrm{Im}d^{n}\rightarrow 0$ is pure-exact, and moreover, it is split. Thus, the sequence of complexes  $0\rightarrow P^{''}\rightarrow P\rightarrow P^{'}\rightarrow 0$ is split degree-wise, and then there is a distinguished triangle  in the homotopy category $\mathbf{K}(R)$:
$$P^{''}\stackrel{f}\longrightarrow P \stackrel{g}\longrightarrow P^{'}\stackrel{h}\longrightarrow P^{''}[1].$$
Note that $h=0$, so the triangle is split and $P= P^{'} \oplus P^{''}$.  Since $P\in \mathbf{K}^{-,ppb}(\mathcal{PP})$, we have $P^{''}\in \mathbf{K}^{-}(\mathcal{PP})$; moreover, $P^{''}$ is pure-exact, and then $P^{''} = 0$ by Lemma \ref{lem 3.3}. Hence $P\cong P^{'}\in\mathbf{K}^{b}_{ac}(\mathcal{PP})$.
\end{proof}

Now we are in a position to prove the main result of this section.

\subsection*{Proof of Theorem \ref{thm 3.1}}
Let $\mathrm{Inc}:\mathbf{K}^{-,ppb}(\mathcal{PP})\longrightarrow \mathbf{K}^{-}(R)$ be the embedding functor, and $Q:\mathbf{K}^{-}(R)\rightarrow \mathbf{D}^{-}(R)$ be the canonical localization functor. We denote the composition functor by $\eta:\mathbf{K}^{-,ppb}(\mathcal{PP})\rightarrow \mathbf{D}^{-}(R)$. Since $\eta(\mathbf{K}^{b}_{ac}(\mathcal{PP}))=0$, by the universal property of quotient functor we have an unique triangle functor $$\overline{\eta}:\mathbf{K}^{-,ppb}(\mathcal{PP})/ \mathbf{K}^{b}_{ac}(\mathcal{PP})\rightarrow \mathbf{D}^{-}(R).$$
Clearly, $\mathrm{Im}\overline{\eta}\subseteq \mathbf{D}^{b}(R)$. By Lemma \ref{lem 3.1}, for any  $X\in \mathbf{K}^{-,b}(\mathcal{P})\simeq\mathbf{D}^{b}(R)$, there exists $P\in\mathbf{K}^{-,ppb}(\mathcal{PP})$ such that $X = \overline{\eta}(P)$.
Hence the triangle functor $\overline{\eta}:\mathbf{K}^{-,ppb}(\mathcal{PP})/
\mathbf{K}^{b}_{ac}(\mathcal{PP})\rightarrow \mathbf{D}^{b}(R)$ is dense.

Let $P_{1}, P_{2}\in\mathbf{K}^{-,ppb}(\mathcal{PP})$ and $\alpha/s: P_{1}\Longleftarrow Y\longrightarrow P_{2}$ be a morphism in $\mathbf{D}^{b}(R)$, where $s: Y\Longrightarrow P_{1}$ is a quasi-isomorphism with $Y\in\mathbf{K}^{-}(R)$ and $\alpha: Y\rightarrow P_{2}$ is a morphism in $\mathbf{K}^{-}(R)$. Let $t: X\rightarrow Y$ be a dg-projective resolution of $Y$, then $t$ is a quasi-isomorphism and we can let $X\in \mathbf{K}^{-,b}(\mathcal{P})$.

For $X\in \mathbf{K}^{-,b}(\mathcal{P})$ and $P_{1}, P_{2}\in \mathbf{K}^{-,ppb}(\mathcal{PP})$, there exist integers $n(X)$, $n(P_{1})$ and $n(P_{2})$, such that $\mathrm{H}^{i}\mathrm{Hom}_{R}(M,X)=0$ for any $i\leq n(X)$ and any $M\in \mathcal{P}$, and $\mathrm{H}^{j}\mathrm{Hom}_{R}(N, P_{k})=0$ for any $j\leq n(P_{k})$ and any $N\in \mathcal{PP}$ ($k=1,2$). Let $n= \mathrm{min}\{n(X), n(P_{1}), n(P_{2})\}$. We consider pure-projective resolution of $\mathrm{Ker}d_{X}^{n}$, and it follows from the above construction that there is a complex $P\in \mathbf{K}^{-,ppb}(\mathcal{PP})$ and a quasi-isomorphism $f: X\rightarrow P$, such that for morphisms
$st: X\rightarrow P_{1}$ and $\alpha t: X\rightarrow P_{2}$, we have morphisms
$g_1: P\rightarrow P_{1}$ and $g_{2}: P\rightarrow P_{2}$ satisfying $st \sim g_{1}f$ and $\alpha t \sim g_{2}f$. Then
we have the following commutative diagram in $\mathbf{K}^{-}(R)$:
$$\xymatrix{
& Y\ar@{=>}[ld]_{s} \ar[rd]^{\alpha}\\
P_{1} &X \ar@{=>}[l]_{st}\ar[r]^{\alpha t}\ar@{=>}[u]_{t}\ar@{=>}[d]^{f} &P_{2}\\
& P \ar@{=>}[lu]^{g_{1}}\ar[ru]_{g_{2}}
   }$$
where the double arrowed morphisms mean quasi-isomorphisms. Note that $g_{1}$ is also a quasi-isomorphism, hence the mapping cone $\mathrm{Con}(g_{1})$ is acyclic. Moreover, $\mathrm{Con}(g_{1})\in\mathbf{K}^{-,ppb}(\mathcal{PP})$, and it follows from Lemma \ref{lem 3.4} that $\mathrm{Con}(g_{1})\in\mathbf{K}^{b}_{ac}(\mathcal{PP})$. By the definition of right fraction, we have
$g_{2}/g_{1}\in\mathrm{Hom}_{\mathbf{K}^{-,ppb}(\mathcal{PP})/
\mathbf{K}^{b}_{ac}(\mathcal{PP})}(P_{1}, P_{2})$
and $\alpha/s = g_{2}/g_{1}=\overline{\eta}(g_{2}/g_{1})$. This implies that the functor $\overline{\eta}$ is full.

It remains to prove $\overline{\eta}$ is faithful. Because the triangle functor $\overline{\eta}$ is full, by \cite[p.446]{Ric89} it suffices
to show that it sends non-zero objects to non-zero objects. Suppose $P\in\mathbf{K}^{-,ppb}(\mathcal{PP})$ and $\overline{\eta}(P)=0$, then $P$ is acyclic and it follows from Lemma \ref{lem 3.4} that $P\in\mathbf{K}^{b}_{ac}(\mathcal{PP})$. Hence $\overline{\eta}$ is faithful. This completes the proof.
\hfill$\square$

\section{\bf Pure singularity categories}
By \cite[Theorem 3.6 (1)]{ZH16}, for any ring $R$ there is a triangle-equivalence $\mathbf{D}^{b}_{pur}(R)\simeq\mathbf{K}^{-,ppb}(\mathcal{PP})$, and it is trivial that $\mathbf{K}^{b}(\mathcal{PP})$ is a thick subcategory of $\mathbf{K}^{-,ppb}(\mathcal{PP})$. In this section, we introduce and study the pure singularity category, as a version of the singularity category with respect to pure-exact structure.  Note that relative singularity categories under another conditions have also been studied, see for example \cite{BDZ15, Chen11, LH15}.

\begin{definition}\label{def 4.1}
For a ring $R$, the pure singularity category is defined to be the Verdier quotient
$$\mathbf{D}_{psg}(R):= \mathbf{D}^{b}_{pur}(R)/\mathbf{K}^{b}(\mathcal{PP})\simeq
\mathbf{K}^{-,ppb}(\mathcal{PP}) / \mathbf{K}^{b}(\mathcal{PP}).$$
\end{definition}

Warfield \cite{War69} showed that any  module $M$ has a pure-projective resolution $\mathbf{P}\rightarrow M\rightarrow 0$. In \cite{Gri70} Griffith defined $\mathrm{Pext}_{R}^{i}(M, N):= \mathrm{H}^{i}\mathrm{Hom}_{R}(\mathbf{P}, N)$. Analogous to the setting of derived category, the following shows that the relative derived functors of Hom with respect to pure-projective modules can be interpreted as the morphisms in the corresponding relative derived category with respect to pure-projective modules.

\begin{proposition}\label{prop 4.1}
Let $M$, $N$ be any $R$-modules. Then $\mathrm{Pext}^{i}_{R}(M, N) \cong
\mathrm{Hom}_{\mathbf{D}^{b}_{pur}(R)}(M, N[i])$.
\end{proposition}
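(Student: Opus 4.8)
The plan is to identify $\mathrm{Pext}^{i}_{R}(M,N)$ with a Hom-set in $\mathbf{D}^{b}_{pur}(R)$ by exploiting the triangle-equivalence $\mathbf{D}^{b}_{pur}(R)\simeq \mathbf{K}^{-,ppb}(\mathcal{PP})$ recalled at the start of this section (from \cite[Theorem 3.6(1)]{ZH16}). Under this equivalence an $R$-module $M$, viewed as a stalk complex in degree $0$, corresponds to a chosen pure-projective resolution $\mathbf{P}_{M}\to M\to 0$, regarded as a complex $\mathbf{P}_{M}\in\mathbf{K}^{-,ppb}(\mathcal{PP})$ concentrated in non-positive degrees; the quasi-isomorphism $\mathbf{P}_{M}\to M$ becomes an isomorphism in $\mathbf{D}^{b}_{pur}(R)$ since its cone is pure-exact. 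Hence
$$\mathrm{Hom}_{\mathbf{D}^{b}_{pur}(R)}(M, N[i]) \cong \mathrm{Hom}_{\mathbf{D}^{b}_{pur}(R)}(\mathbf{P}_{M}, N[i]) \cong \mathrm{Hom}_{\mathbf{K}^{-,ppb}(\mathcal{PP})}(\mathbf{P}_{M}, N[i]).$$
So the task reduces to showing $\mathrm{Hom}_{\mathbf{K}(R)}(\mathbf{P}_{M}, N[i]) \cong \mathrm{H}^{i}\mathrm{Hom}_{R}(\mathbf{P}_{M}, N) = \mathrm{Pext}^{i}_{R}(M,N)$.

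First I would establish the second isomorphism above carefully: since $\mathbf{P}_{M}$ is a bounded-above complex of pure-projective modules and $N$ is a stalk, any morphism $\mathbf{P}_{M}\to N[i]$ in $\mathbf{D}^{b}_{pur}(R)$ is represented, after replacing the source by a further pure-projective resolution, by an honest cochain map up to homotopy — this is the standard "lifting along pure-quasi-isomorphisms" argument, using that $\mathrm{Hom}_{R}(P,-)$ carries pure-exact sequences to exact sequences for $P\in\mathcal{PP}$, so the roofs $\mathbf{P}_{M}\Leftarrow Z\to N[i]$ can be rectified. Concretely, a pure-quasi-isomorphism $Z\to \mathbf{P}_{M}$ with $\mathbf{P}_{M}$ a complex of pure-projectives admits a section in $\mathbf{K}(R)$ up to homotopy (or one argues directly that $\mathrm{Hom}_{\mathbf{K}(R)}(\mathbf{P}_{M},-)$ already computes the Hom in the Verdier quotient because $\mathbf{P}_{M}$ behaves like a "pure-projective" object in $\mathbf{K}(R)$ relative to pure-acyclics, cf. Lemma~\ref{lem 3.3}). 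Then the classical computation gives $\mathrm{Hom}_{\mathbf{K}(R)}(\mathbf{P}_{M}, N[i])\cong \mathrm{H}^{i}\mathrm{Hom}_{R}(\mathbf{P}_{M},N)$: a cochain map $\mathbf{P}_{M}\to N[i]$ is exactly a map $P_{M}^{-i}\to N$ killing the image of $d^{-i-1}$, and homotopy to zero means it factors through $d^{-i}$, which is precisely $\mathrm{Ker}/\mathrm{Im}$ of the cochain complex $\mathrm{Hom}_{R}(\mathbf{P}_{M},N)$.

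The remaining point, which I expect to be the only genuine subtlety, is well-definedness/independence: that both sides are independent of the choice of pure-projective resolution $\mathbf{P}_{M}$. On the $\mathrm{Pext}$ side this is Griffith's observation that any two pure-projective resolutions are homotopy-equivalent as complexes (their pure-quasi-isomorphism splits by Lemma~\ref{lem 3.3}-type reasoning since all terms are pure-projective and the comparison maps are between pure-exact complexes), so $\mathrm{H}^{i}\mathrm{Hom}_{R}(\mathbf{P}_{M},N)$ is canonical; on the derived-category side it is automatic. Finally one checks naturality in $M$ and $N$ to upgrade the isomorphism of groups to one of functors, though the statement as phrased only asserts the isomorphism of abelian groups. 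I would assemble these three ingredients — the equivalence $\mathbf{D}^{b}_{pur}(R)\simeq\mathbf{K}^{-,ppb}(\mathcal{PP})$, the lifting of derived morphisms to cochain maps modulo homotopy, and the elementary cohomology computation — into the proof, with the lifting step being where most of the care is needed.
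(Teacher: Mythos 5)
Your proposal is correct and follows essentially the same route as the paper: replace $M$ by a pure-projective resolution $\mathbf{P}$, identify $\mathrm{H}^{i}\mathrm{Hom}_{R}(\mathbf{P},N)$ with $\mathrm{Hom}_{\mathbf{K}(R)}(\mathbf{P},N[i])$, and then observe that for a bounded-above complex of pure-projectives the localization at pure-quasi-isomorphisms does not change this Hom-group --- the step you correctly flag as the only real subtlety, and which the paper simply outsources to \cite[Propositions 3.1(1) and 3.2]{ZH16}. The only blemish is your intermediate group $\mathrm{Hom}_{\mathbf{K}^{-,ppb}(\mathcal{PP})}(\mathbf{P}_{M},N[i])$, which does not literally make sense since $N[i]$ need not lie in $\mathbf{K}^{-,ppb}(\mathcal{PP})$; your subsequent reduction to $\mathrm{Hom}_{\mathbf{K}(R)}(\mathbf{P}_{M},N[i])$ is the correct formulation and repairs this.
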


\begin{proof}
Let $\mathbf{P}\rightarrow M\rightarrow 0$ be a pure-projective resolution of $M$. Viewing $M$ as a complex concentrated in degree zero, then $\mathbf{P}\rightarrow M$ is a pure-quasi-isomorphism, and $\mathbf{P}\cong M$ in $\mathrm{\mathbf{D}}^{b}_{pur}(R)$. Hence we have
$$\begin{aligned}
\mathrm{Pext}^{i}_{R}(M, N) &= \mathrm{H}^{i}\,\mathrm{Hom}_{R}(\mathbf{P}, N)\\
&= \mathrm{Hom}_{\mathrm{\mathbf{K}}(R)}(\mathbf{P}, N[i])\\
&\cong \mathrm{Hom}_{\mathrm{\mathbf{D}}_{pur}(R)}(\mathbf{P}, N[i])\\
&\cong \mathrm{Hom}_{\mathrm{\mathbf{D}}^{b}_{pur}(R)}(M, N[i])
\end{aligned}$$
where the last two isomorphisms hold by \cite[Proposition 3.1(1)]{ZH16} and \cite[Proposition 3.2]{ZH16}.
\end{proof}

\begin{definition}\label{def 4.2}(\cite{Gri70})
Let $M$ be an $R$-module. The pure-projective dimension $\mathrm{p.pd}(M)$ of $M$ is defined to be the smallest positive integer $n$ such that $\mathrm{Pext}_{R}^{n+1}(M, N) = 0$, and set $\mathrm{p.pd}(M) = \infty$ if no such $n$ exists. For any ring $R$, the pure-global dimension $\mathrm{p.gldim}(R)$ is defined as the supremum of the pure-projective dimensions of all $R$-modules.
\end{definition}

Note that pure-projective dimension of a module $M$ can be also defined by the shortest length of pure-projective resolutions of $M$. It is a routine job to show that these two definitions of pure-projective dimension coincide. For any ring $R$, the supremum of the pure-projective dimensions of all $R$-modules is equal to the supremum of the pure-injective dimensions of all $R$-modules.

Recall that $\mathbf{D}_{sg}(R)=0$ if and only if the global dimension of $R$ is finite. We have the following pure version, which implies that the notion of ``pure singularity'' seems to be reasonable.

\begin{proposition}\label{prop 4.2}
Let $R$ be a ring. Then $\mathbf{D}_{psg}(R)=0$ if and only if $\mathrm{p.gldim}(R)$ is finite.
\end{proposition}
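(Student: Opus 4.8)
The plan is to show both implications by exploiting the structure theory built up in the previous section, in particular Proposition~\ref{prop 4.1} and the equivalence $\mathbf{D}^{b}_{pur}(R)\simeq\mathbf{K}^{-,ppb}(\mathcal{PP})$. First suppose $\mathrm{p.gldim}(R) = d < \infty$. I would argue that then every object of $\mathbf{K}^{-,ppb}(\mathcal{PP})$ is isomorphic in $\mathbf{D}^{b}_{pur}(R)$ to a bounded complex of pure-projectives, so that $\mathbf{K}^{-,ppb}(\mathcal{PP}) = \mathbf{K}^{b}(\mathcal{PP})$ up to the relevant equivalences and the quotient vanishes. Concretely, take $P \in \mathbf{K}^{-,ppb}(\mathcal{PP})$, say with $\mathrm{H}^{i}\mathrm{Hom}_{R}(Q, P) = 0$ for all $i \leq n$ and all $Q \in \mathcal{PP}$. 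A soft truncation at an appropriate degree $m \leq n - d$ produces a pure-projective syzygy-type module $K^{m} = \mathrm{Ker}(d^{m})$ whose $d$-th ``pure syzygy'' is computed by the tail $\cdots \to P^{m-2} \to P^{m-1} \to K^{m} \to 0$; since this tail is a pure-exact complex (as $P$ is pure-acyclic below degree $n$) it is a pure-projective resolution of $K^m$ realising $\mathrm{Pext}$, and finiteness of $\mathrm{p.pd}(K^{m}) \leq d$ forces the $d$-th pure syzygy to be pure-projective, so that brutally truncating and replacing produces a bounded complex of pure-projectives pure-quasi-isomorphic to $P$. Hence $\mathbf{D}_{psg}(R) = 0$.

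Conversely, suppose $\mathbf{D}_{psg}(R) = 0$ but $\mathrm{p.gldim}(R) = \infty$. Then there is an $R$-module $M$ with $\mathrm{p.pd}(M) = \infty$. Viewing $M$ as a stalk complex in degree $0$, $M$ is an object of $\mathbf{D}^{b}_{pur}(R)$, and since the quotient by $\mathbf{K}^{b}(\mathcal{PP})$ is zero, $M$ must already lie in $\mathbf{K}^{b}(\mathcal{PP})$ up to isomorphism in $\mathbf{D}^{b}_{pur}(R)$ --- that is, $M$ admits a finite pure-projective resolution. But a finite pure-projective resolution of $M$ of length $\ell$ gives, via Proposition~\ref{prop 4.1}, $\mathrm{Pext}^{i}_{R}(M, N) \cong \mathrm{Hom}_{\mathbf{D}^{b}_{pur}(R)}(M, N[i]) = 0$ for all $i > \ell$ and all $N$, so $\mathrm{p.pd}(M) \leq \ell < \infty$, a contradiction. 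The only point needing care here is the passage ``$M \cong 0$ in the quotient $\implies M$ isomorphic in $\mathbf{D}^{b}_{pur}(R)$ to an object of $\mathbf{K}^{b}(\mathcal{PP})$'': this is the standard fact that an object killed by a Verdier quotient $\mathcal{T}/\mathcal{S}$ is a direct summand of an object of $\mathcal{S}$ (here one also uses that $\mathbf{K}^{b}(\mathcal{PP})$ is closed under direct summands, so the summand is again in $\mathbf{K}^{b}(\mathcal{PP})$, as pure-projectivity is a summand-closed property).

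I expect the main obstacle to be the forward direction, specifically making precise that the bounded-below tail of a complex $P \in \mathbf{K}^{-,ppb}(\mathcal{PP})$ genuinely computes $\mathrm{Pext}$ of the truncated kernel module, so that the hypothesis $\mathrm{p.pd} \leq d$ can be applied to conclude the relevant syzygy is pure-projective. This requires observing that below the bound $n$ the complex $P$ is not merely acyclic but pure-acyclic (which follows from $\mathrm{H}^{i}\mathrm{Hom}_{R}(Q,-) = 0$ for all pure-projective $Q$), together with the fact that a pure-acyclic bounded-below complex of pure-projectives augmented over $K^m$ is exactly a pure-projective resolution of $K^m$. Once that identification is in hand, the truncation-and-replacement argument is routine, and the rest follows formally from Proposition~\ref{prop 4.1} and Definition~\ref{def 4.2}.
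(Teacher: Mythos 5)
Your proposal is essentially correct and, in the backward direction, follows the same overall strategy as the paper, but the two arguments distribute the work differently. For sufficiency, the paper simply invokes \cite[Theorem 4.7]{ZH16} to get, for each $X\in\mathbf{D}^{b}_{pur}(R)$, a pure-quasi-isomorphism from a bounded complex of pure-projectives; your truncation argument in $\mathbf{K}^{-,ppb}(\mathcal{PP})$ essentially re-proves that theorem. It works, but two points are only asserted: (i) the relative Schanuel/dimension-shifting fact that $\mathrm{p.pd}(K^{m})\le d$ forces the $d$-th syzygy of \emph{the given} resolution to be pure-projective (the paper's remark after Definition~\ref{def 4.2} signals this is routine, but it is a real step); and (ii) that the ``brutal truncation and replacement'' really produces a \emph{pure}-quasi-isomorphism $P\to Q$ with $Q\in\mathbf{K}^{b}(\mathcal{PP})$ --- here one should write down the chain map collapsing the tail onto $K^{m-d}$ and check $\mathrm{Hom}_{R}(Q',-)$ of its cone is acyclic for all pure-projective $Q'$, since mere quasi-isomorphism is not enough in $\mathbf{D}_{pur}$.

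For necessity, your shortcut is actually cleaner than the paper's route, provided you repair one imprecision: being isomorphic in $\mathbf{D}^{b}_{pur}(R)$ to an object $X\in\mathbf{K}^{b}(\mathcal{PP})$ is \emph{not} verbatim the statement that $M$ ``admits a finite pure-projective resolution'' --- extracting an honest resolution is exactly what the paper spends most of its proof doing (converting the right fraction into a genuine pure-quasi-isomorphism $X\to M$, soft-truncating, and checking $\mathrm{Ker}d^{0}_{X}$ is pure-projective). You do not need that step, though: once $M\cong X$ in $\mathbf{D}^{b}_{pur}(R)$ with $X$ concentrated in degrees $[a,b]$, Proposition~\ref{prop 4.1} together with $\mathrm{Hom}_{\mathbf{D}_{pur}(R)}(X,N[i])\cong\mathrm{Hom}_{\mathbf{K}(R)}(X,N[i])=0$ for $i>-a$ gives $\mathrm{Pext}^{i}_{R}(M,N)=0$ for $i\gg 0$ directly, so you should phrase the conclusion that way rather than via a resolution. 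Your appeal to the Verdier-quotient kernel being the thick closure of $\mathbf{K}^{b}(\mathcal{PP})$ is the right mechanism, although the justification of summand-closure should refer to thickness of $\mathbf{K}^{b}(\mathcal{PP})$ in $\mathbf{K}^{-,ppb}(\mathcal{PP})$ (asserted by the paper at the start of Section 4), not merely to pure-projectivity of modules being summand-closed. Finally, both your argument and the paper's conclude $\mathrm{p.gldim}(R)<\infty$ from $\mathrm{p.pd}(M)<\infty$ for every $M$; this shared jump from pointwise to uniform finiteness is not addressed in either proof.
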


\begin{proof}
If the pure-global dimension $\mathrm{p.gldim}(R)$ of $R$ is finite, then it follows from \cite[Theorem 4.7]{ZH16} that for any $X\in \mathbf{D}^{b}_{pur}(R)$, there exists a pure-quasi-isomorphism $P\rightarrow X$ with $P\in \mathbf{K}^{b}(\mathcal{PP})$. Then $\mathbf{D}^{b}_{pur}(R)\simeq \mathbf{K}^{b}(\mathcal{PP})$, and hence $\mathbf{D}_{psg}(R)=0$.

It remains to prove the necessity. Suppose $\mathbf{D}_{psg}(R)=0$. Let $M$ be an $R$-module. Then $M=0$ in $\mathbf{D}_{psg}(R)$, and there exists some $X\in \mathbf{K}^{b}(\mathcal{PP})$ such that $M\cong X$ in $\mathbf{D}^{b}_{pur}(R)$. We denote this isomorphism by a right fraction $\alpha/f: M\stackrel{f}\Longleftarrow Y \stackrel{\alpha}\longrightarrow X$, where $f$ and $\alpha$ are pure-quasi-isomorphisms. Then there is a triangle $Y\stackrel{\alpha}\longrightarrow X\longrightarrow \mathrm{Con}(\alpha)\longrightarrow Y[1]$ in $\mathbf{K}(R)$ with $\mathrm{Con}(\alpha)$ pure-exact. By applying $\mathrm{Hom}_{\mathbf{K}(R)}(X,-)$ to it, we get an exact sequence
$$\mathrm{Hom}_{\mathbf{K}(R)}(X,Y)\longrightarrow \mathrm{Hom}_{\mathbf{K}(R)}(X,X)\longrightarrow \mathrm{Hom}_{\mathbf{K}(R)}(X,\mathrm{Con}(\alpha)).$$
It follows from \cite[Lemma 2.9(1)]{ZH16} that $\mathrm{Hom}_{\mathbf{K}(R)}(X,\mathrm{Con}(\alpha))=0$, so there is a cochain map $\beta: X\rightarrow Y$ such that $\alpha\beta$ is homotopic to $\mathrm{Id}_{X}$. Thus we get a pure-quasi-isomorphism $f\beta: X\rightarrow M$.

Consider the soft truncation $X_{0\supset}= \cdots\rightarrow X^{-2}\rightarrow X^{-1}\rightarrow \mathrm{Ker}d_{X}^{0}\rightarrow 0$ of $X$.
Then there is a pure-quasi-isomorphism $ f\beta\iota: X_{0\supset}\rightarrow M$, where $\iota: X_{0\supset}\rightarrow X$ is a natural embedding.
Since $X\in \mathbf{K}^{b}(\mathcal{PP})$, we assume that there is an integer $n$ such that $X^{i}=0$ for any $i>n$.  Note that the sequence $$0\longrightarrow \mathrm{Ker}d^{0}_{X}\longrightarrow X^{0}\longrightarrow\cdots\longrightarrow X^{n-1}\longrightarrow X^{n}\longrightarrow 0$$
is pure-exact with each $X^i\in \mathcal{PP}$, and it follows that $\mathrm{Ker}d^{0}_{X}$ is also pure-projective. Thus $M$ has a bounded pure-projective resolution $X_{0\supset}\rightarrow M$, and then $\mathrm{p.pd}(M)<\infty$. This yields the desired assertion.
\end{proof}

The notion of recollement of triangulated categories was introduced by Beilinson, Bernstein and Deligne \cite{BBD82} with an idea that one category can be viewed as being ``glued together'' from two others. Let $\mathcal{T}^{'}$,  $\mathcal{T}$ and  $\mathcal{T}^{''}$ be triangulated categories. We say that $\mathcal{T}$ admits a recollement relative to  $\mathcal{T}^{'}$ and  $\mathcal{T}^{'}$, if there exist six triangulated functors as in the following diagram
$$\xymatrix@C=40pt{
\mathcal{T}^{'} \ar[r]^{i_{*}} & \mathcal{T} \ar[r]^{j^{*}} \ar@/_1pc/[l]_{i^{*}}\ar@/^1pc/[l]^{i^{!}}
 & \mathcal{T}^{''}\ar@/_1pc/[l]_{j_{!}}\ar@/^1pc/[l]^{j_{*}}
}$$
such that \\
\indent $(1)$ $(i^{*}, i_{*})$, $(i_{*}, i^{!})$, $(j_{!}, j^{*})$ and $(j^{*}, j_{*})$ are adjoint pairs;  \\
\indent $(2)$ $i_{*}$, $j_{*}$ and $j_{!}$ are full embedding;\\
\indent $(3)$ $j^{*}i_{*} =0$;\\
\indent $(4)$ for each $X\in \mathcal{T}$, there are distinguished triangles
$$i_{*}i^{!}(X)\longrightarrow X\longrightarrow j_{*}j^{*}(X)\longrightarrow i_{*}i^{!}(X)[1],$$
$$j_{!}j^{*}(X)\longrightarrow X\longrightarrow i_{*}i^{*}(X)\longrightarrow j_{!}j^{*}(X)[1].$$

\begin{theorem}\label{thm 4.1}
Let $A$, $B$ and $C$ be rings. Assume that $\mathbf{D}^{b}_{pur}(A)$ admits the following recollement
$$\xymatrix@C=40pt{
\mathbf{D}^{b}_{pur}(B)\ar[r]^{i_{*}} & \mathbf{D}^{b}_{pur}(A) \ar[r]^{j^{*}} \ar@/_1pc/[l]_{i^{*}}\ar@/^1pc/[l]^{i^{!}}
 & \mathbf{D}^{b}_{pur}(C).\ar@/_1pc/[l]_{j_{!}}\ar@/^1pc/[l]^{j_{*}}
}$$
Then $\mathbf{D}_{psg}(A)=0$ if and only if $\mathbf{D}_{psg}(B) = 0 = \mathbf{D}_{psg}(C)$.
\end{theorem}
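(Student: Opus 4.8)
The plan is to use the characterization from Proposition~\ref{prop 4.2}: $\mathbf{D}_{psg}(R)=0$ if and only if $\mathrm{p.gldim}(R)<\infty$. So the statement to prove is that, in the presence of a recollement of the bounded pure derived categories, $\mathrm{p.gldim}(A)<\infty$ if and only if both $\mathrm{p.gldim}(B)<\infty$ and $\mathrm{p.gldim}(C)<\infty$. Equivalently, since $\mathbf{D}^b_{pur}(R)\simeq\mathbf{K}^{-,ppb}(\mathcal{PP})$ and $\mathbf{D}_{psg}(R)\simeq\mathbf{K}^{-,ppb}(\mathcal{PP})/\mathbf{K}^b(\mathcal{PP})$, I want to show that the subcategory $\mathbf{K}^b(\mathcal{PP})$ inside $\mathbf{D}^b_{pur}(R)$ is precisely the subcategory of objects of finite pure-projective dimension, and that this property is ``local'' with respect to the recollement.

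First I would reformulate ``$\mathbf{D}_{psg}(R)=0$'' as ``$\mathbf{D}^b_{pur}(R)=\mathbf{K}^b(\mathcal{PP})$'', i.e. every object of $\mathbf{D}^b_{pur}(R)$ has finite pure-projective dimension (using \cite[Theorem 4.7]{ZH16} as in the proof of Proposition~\ref{prop 4.2}). Next, for the ``only if'' direction, I would exploit that $i_*$ is a full embedding and $j^*$ has fully faithful adjoints $j_!,j_*$, so $\mathbf{D}^b_{pur}(B)$ and $\mathbf{D}^b_{pur}(C)$ may be viewed as triangulated subcategories (respectively, quotients) of $\mathbf{D}^b_{pur}(A)$. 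If $\mathbf{D}^b_{pur}(A)=\mathbf{K}^b(\mathcal{PP}_A)$, then for any $Y\in\mathbf{D}^b_{pur}(B)$ we have $i_*(Y)\in\mathbf{K}^b(\mathcal{PP}_A)$; applying $i^*$ (which is a triangulated functor and satisfies $i^*i_*\cong\mathrm{Id}$) and using that $i^*$ sends $\mathbf{K}^b(\mathcal{PP}_A)$ into $\mathbf{K}^b(\mathcal{PP}_B)$ — this last containment is the point that needs care, and I would verify it by noting $i^*$ preserves compactness/the generators, or more safely by arguing directly that $i^*i_*(Y)\cong Y$ forces $Y$ to have finite pure-projective dimension in $\mathbf{D}^b_{pur}(B)$ via a dévissage along the recollement triangles. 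A cleaner route: from the triangle $j_!j^*(X)\to X\to i_*i^*(X)\to$ with $X=i_*(Y)$, one gets $j^*i_*(Y)=0$, so $i_*(Y)\cong i_*i^*i_*(Y)$, and then $\mathbf{D}_{psg}(B)=0$ follows because $\mathbf{D}_{psg}$ is a Verdier quotient invariant under the equivalence induced on the image of $i_*$. For $C$: every object of $\mathbf{D}^b_{pur}(C)$ is $j^*$ of something (since $j^*j_!\cong\mathrm{Id}$), hence $j^*(\mathbf{K}^b(\mathcal{PP}_A))$ is essentially all of $\mathbf{D}^b_{pur}(C)$, giving $\mathbf{D}_{psg}(C)=0$, once one checks $j^*$ carries bounded pure-projective complexes to objects of finite pure-projective dimension.

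For the ``if'' direction, assume $\mathbf{D}_{psg}(B)=0=\mathbf{D}_{psg}(C)$, i.e. $\mathbf{D}^b_{pur}(B)=\mathbf{K}^b(\mathcal{PP}_B)$ and $\mathbf{D}^b_{pur}(C)=\mathbf{K}^b(\mathcal{PP}_C)$. Given $X\in\mathbf{D}^b_{pur}(A)$, use the recollement triangle $i_*i^!(X)\to X\to j_*j^*(X)\to i_*i^!(X)[1]$. Now $j^*(X)\in\mathbf{D}^b_{pur}(C)$ has finite pure-projective dimension, and $i^!(X)\in\mathbf{D}^b_{pur}(B)$ has finite pure-projective dimension; I must then show $i_*$ and $j_*$ send objects of finite pure-projective dimension to objects of finite pure-projective dimension, i.e. into $\mathbf{K}^b(\mathcal{PP}_A)$. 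Since $\mathbf{K}^b(\mathcal{PP}_A)$ is a thick triangulated subcategory of $\mathbf{D}^b_{pur}(A)$, and $X$ sits in a triangle between the images of $i_*i^!(X)$ and $j_*j^*(X)$, once those two images lie in $\mathbf{K}^b(\mathcal{PP}_A)$ we conclude $X\in\mathbf{K}^b(\mathcal{PP}_A)$, hence $\mathbf{D}_{psg}(A)=0$.

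The crux — and the main obstacle — is showing that the six recollement functors interact well with the subcategories $\mathbf{K}^b(\mathcal{PP})$, equivalently that they preserve finiteness of pure-projective dimension. The honest way to get this for free is: $\mathbf{K}^b(\mathcal{PP})$ is a thick subcategory of $\mathbf{D}^b_{pur}(R)$ that is \emph{generated} (as a thick subcategory) by the pure-projective modules, and in fact equals the thick subcategory generated by $\mathcal{PP}$; so it suffices to track where the functors send a single pure-projective module $P$, viewed in degree $0$. Since $i_*$, $j_!$, $j_*$ are fully faithful and triangulated, and each of $i^*,i^!,j^*$ is triangulated, the issue reduces to: is the image of a module concentrated in degree $0$ again (quasi-isomorphic to) a bounded complex of pure-projectives? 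If the recollement is of the ``standard'' type induced by a suitable functor between module categories this is automatic, but in the stated generality I would argue abstractly: $\mathbf{D}_{psg}(R)=0$ is equivalent to $\mathbf{D}^b_{pur}(R)$ having no proper thick subcategory containing all of $\mathcal{PP}$, or more precisely to $\mathbf{K}^b(\mathcal{PP})=\mathbf{D}^b_{pur}(R)$, which is a purely triangulated statement; and recollements are well known to transport such ``$\mathbf{D}^b=\mathbf{K}^b(\text{projectives})$''–type identities between $\mathcal{T}$ and the pair $(\mathcal{T}',\mathcal{T}'')$ — the standard argument for ordinary singularity categories in a recollement (see the proof for $\mathbf{D}_{sg}$ in the literature cited) carries over verbatim once ``bounded homotopy category of projectives'' is replaced by ``bounded homotopy category of pure-projectives'' and ``$\mathbf{D}^b$'' by ``$\mathbf{D}^b_{pur}$'', because the only inputs are: $\mathbf{K}^b(\mathcal{PP})$ is thick, the recollement triangles, and fully-faithfulness of $i_*,j_!,j_*$. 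I would therefore present the proof as a formal consequence of these three facts, spelling out the dévissage in both directions as above, and flag the functor-stability of $\mathbf{K}^b(\mathcal{PP})$ as the lemma doing the real work.
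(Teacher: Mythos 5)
Your reduction via Proposition \ref{prop 4.2} and your use of the two recollement triangles are the right starting points, but the argument has a genuine gap exactly where you flag it: the ``lemma doing the real work'', namely that the six functors carry $\mathbf{K}^{b}(\mathcal{PP})$ into $\mathbf{K}^{b}(\mathcal{PP})$ (equivalently, preserve finiteness of pure-projective dimension), is neither proved nor available in the stated generality. The recollement here is an abstract datum of triangulated functors between bounded pure derived categories; nothing forces $i_{*}$ of a pure-projective $B$-module, or $j_{*}$ of a bounded complex of pure-projectives over $C$, to be isomorphic in $\mathbf{D}^{b}_{pur}(A)$ to a bounded complex of pure-projective $A$-modules. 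The appeal to ``the standard argument for $\mathbf{D}_{sg}$ carries over verbatim'' does not rescue this, because even classically such preservation statements are established only for recollements of a specific shape (induced by derived tensor or Hom with concrete complexes), not for arbitrary abstract ones. So your d\'{e}vissage for the ``if'' direction --- place $i_{*}i^{!}(X)$ and $j_{*}j^{*}(X)$ in $\mathbf{K}^{b}(\mathcal{PP}_{A})$ and invoke thickness --- stalls at its first step, and the ``only if'' direction has the same unproved dependency.

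The paper's proof sidesteps this entirely by never asking where the functors send $\mathbf{K}^{b}(\mathcal{PP})$; it works instead with vanishing of Hom in large shifts. If $\mathrm{p.gldim}(B)<\infty$, then $\mathbf{D}^{b}_{pur}(B)\simeq\mathbf{K}^{b}(\mathcal{PP}_{B})$, so $\mathrm{Hom}_{\mathbf{D}^{b}_{pur}(B)}(i^{*}(M),i^{!}(N)[n])=0$ for $n\gg 0$, and full faithfulness of $i_{*}$ transports this vanishing into $\mathbf{D}^{b}_{pur}(A)$ \emph{without} any information about the image of $i_{*}$; similarly for $C$. Of the four outer terms in the long exact sequences obtained by applying Hom to the triangles for $M$ and $N$, two vanish identically by $j^{*}i_{*}=0$ and the adjunction $(j^{*},j_{*})$, and the other two vanish for $n\gg 0$ by the above. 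The long exact sequences then yield $\mathrm{Pext}^{n}_{A}(M,N)\cong\mathrm{Hom}_{\mathbf{D}^{b}_{pur}(A)}(M,N[n])=0$ for $n\gg 0$ (via Proposition \ref{prop 4.1}), hence $\mathrm{p.pd}(M)<\infty$; the forward direction is the same Hom-vanishing observation applied to the full embeddings. You should replace your functor-stability lemma by this vanishing argument: as written, your proposal reduces the theorem to a claim that is at least as hard as the theorem itself.
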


\begin{proof}
Since $\mathbf{D}^{b}_{pur}(B)$ and $\mathbf{D}^{b}_{pur}(C)$ can be fully embedded into $\mathbf{D}^{b}_{pur}(A)$, it is clear that the finiteness of $\mathrm{p.gldim}(A)$ implies the finiteness of both $\mathrm{p.gldim}(B)$ and $\mathrm{p.gldim}(C)$. By Proposition \ref{prop 4.2}, it follows that $\mathbf{D}_{psg}(B) = 0 = \mathbf{D}_{psg}(C)$ if $\mathbf{D}_{psg}(A)=0$.

For the converse, it suffices to prove that if $B$ and $C$ are of finite pure-global dimension, then $\mathrm{p.gldim}(A)<\infty$. Let $M$ and $N$ be any $A$-modules. The above recollement induces the following distinguished triangles in $\mathbf{D}^{b}_{pur}(A)$:
$$j_{!}j^{*}(M)\longrightarrow M\longrightarrow i_{*}i^{*}(M)\longrightarrow j_{!}j^{*}(M)[1],$$
$$i_{*}i^{!}(N)\longrightarrow N\longrightarrow j_{*}j^{*}(N)\longrightarrow i_{*}i^{!}(N)[1].$$
We abbreviate $\mathrm{Hom}_{\mathbf{D}^{b}_{pur}(A)}(-,-)$ with  $\mathbf{D}^{b}_{pur}(A)(-,-)$. By applying $\mathrm{Hom}_{\mathbf{D}^{b}_{pur}(A)}(-,i_{*}i^{!}(N))$ and $\mathrm{Hom}_{\mathbf{D}^{b}_{pur}(A)}(-,j_{*}j^{*}(N))$ to the first triangle, we get the following long exact sequences:
$$\cdots\rightarrow \mathbf{D}^{b}_{pur}(A)(i_{*}i^{*}(M), i_{*}i^{!}(N)[n])\rightarrow
\mathbf{D}^{b}_{pur}(A)(M,i_{*}i^{!}(N)[n])\rightarrow \mathbf{D}^{b}_{pur}(A)(j_{!}j^{*}(M),i_{*}i^{!}(N)[n])
\rightarrow \cdots $$
$$\cdots\rightarrow \mathbf{D}^{b}_{pur}(A)(i_{*}i^{*}(M), j_{*}j^{*}(N)[n])\rightarrow
\mathbf{D}^{b}_{pur}(A)(M,j_{*}j^{*}(N)[n])\rightarrow \mathbf{D}^{b}_{pur}(A)(j_{!}j^{*}(M),j_{*}j^{*}(N)[n])
\rightarrow \cdots $$
We have $\mathbf{D}^{b}_{pur}(A)(j_{!}j^{*}(M),i_{*}i^{!}(N)[n])\cong \mathbf{D}^{b}_{pur}(C)(j^{*}(M), j^{*}i_{*}i^{!}(N)[n])=0$ for every $n\in \mathbb{Z}$ since $j^{*}i_{*}=0$; and similarly, $\mathbf{D}^{b}_{pur}(A)(i_{*}i^{*}(M), j_{*}j^{*}(N)[n])=0$ follows by the adjunction $(j^{*}, j_{*})$.
Since $\mathrm{p.gldim}(B)<\infty$, $\mathbf{D}^{b}_{pur}(B)\simeq \mathbf{K}^{b}(\mathcal{PP})$. Noting that $i^{*}(M)$ and $i^{!}(N)$ lie in $\mathbf{K}^{b}(\mathcal{PP})$, one has $\mathrm{Hom}_{\mathbf{D}^{b}_{pur}(B)}(i^{*}(M), i^{!}(N)[n])= 0$ for $n>>0$; moreover, $i_{*}$ is a full embedding and then $\mathbf{D}^{b}_{pur}(A)(i_{*}i^{*}(M), i_{*}i^{!}(N)[n])=0$. Similarly, since $\mathrm{p.gldim}(C)<\infty$,
it follows that $\mathbf{D}^{b}_{pur}(A)(j_{!}j^{*}(M),j_{*}j^{*}(N)[n])=0$ for $n>>0$.

Now we apply  $\mathrm{Hom}_{\mathbf{D}^{b}_{pur}(A)}(M,-)$ to the second triangle, and obtain the following long exact sequence
$$\cdots\rightarrow \mathbf{D}^{b}_{pur}(A)(M, i_{*}i^{!}(N)[n])\rightarrow
\mathbf{D}^{b}_{pur}(A)(M, N[n])\rightarrow \mathbf{D}^{b}_{pur}(A)(M, j_{*}j^{*}(N)[n])\rightarrow \cdots $$
By the above argument, we have $\mathbf{D}^{b}_{pur}(A)(M, i_{*}i^{!}(N)[n])=0 = \mathbf{D}^{b}_{pur}(A)(M, j_{*}j^{*}(N)[n])$ for $n>>0$. Then
$\mathrm{Pext}_{A}^{n}(M,N)\cong \mathrm{Hom}_{\mathbf{D}^{b}_{pur}(A)}(M, N[n]) = 0$. This implies that $\mathrm{p.gldim}(A)<\infty$.
\end{proof}

\begin{ack*}
The authors appreciate the referee for helpful comments and suggestions, especially for pointing out \cite{Bru79, Bru83} and for some examples and counterexamples about the assumption of Theorem \ref{thm 3.1}.
This work was supported by National Natural Science Foundation of China (No. 11871125), Natural Science Foundation of Chongqing (No. cstc2018jcyjAX0541) and the Science and Technology Research Program of Chongqing Municipal Education Commission (No. KJQN201800509).\\
\end{ack*}

\bigskip

\end{document}